\newtheoremstyle{one}
{11pt}% measure of space to leave above the theorem. E.g.: 3pt
{11pt}% measure of space to leave below the theorem. E.g.: 3pt
{\it}% name of font to use in the body of the theorem
{}% measure of space to indent
{\bf}% name of head font
{.}% punctuation between head and body
{1mm}% space after theorem head
{}% Manually specify head
\newtheoremstyle{two}
{11pt}% measure of space to leave above the theorem. E.g.: 3pt
{11pt}% measure of space to leave below the theorem. E.g.: 3pt
{}% name of font to use in the body of the theorem
{}% measure of space to indent
{\bf}% name of head font
{.}% punctuation between head and body
{1mm}% space after theorem head
{}% Manually specify head
\theoremstyle{one}
\newtheorem{theorem}{Theorem}[section]
\newtheorem{question}[theorem]{Question}
\newtheorem{lemma}[theorem]{Lemma}
\newtheorem{proposition}[theorem]{Proposition}
\newtheorem{corollary}[theorem]{Corollary}
\theoremstyle{two}
\newtheorem{definition}[theorem]{Definition}
\newtheorem{example}[theorem]{Example}
\newtheorem{alg}[theorem]{Algorithm}
\newtheorem{remark}[theorem]{Remark}
\title{Generalized Eckardt Points on del Pezzo surfaces of degree 1}
\author{Julie Desjardins}
\author{Yu Fu}
\author{Kelly Isham}
\author{Rosa Winter}
\begin{document}

\begin{abstract}
    We study intersections of exceptional curves on del Pezzo surfaces of degree 1, motivated by questions in arithmetic geometry. Outside characteristics 2 and 3, at most 10 exceptional curves can intersect in a point. We classify the different ways in which 10 exceptional curves can intersect, construct a new family of surfaces with 10 exceptional curves intersecting in a point, and discuss strategies for finding more such examples.
\end{abstract}

\maketitle

\section{Introduction}
Del Pezzo surfaces are smooth projective algebraic surfaces with ample anticanonical divisor, and can be classified by their \textsl{degree}, which is defined as the self-intersection of the canonical divisor; an integer between 1 and 9. Both the geometric as well as the arithmetic complexity of del Pezzo surfaces goes up as the degree goes down (see also the \textsl{Motivation from the rational points perspective} below).
Over an algebraically closed field, a del Pezzo surface contains a fixed number of exceptional curves (which we often call \textsl{lines}), depending on the degree; the lower the degree, the more lines on the surface. 

Del Pezzo surfaces of degree 1 contain 240 lines over an algebraically closed field, and at most 10 lines intersect in one point outside characteristic 2 and 3 (at most 16 and 12 in those characteristics, respectively) \cite{vLW}. Besides this, little is known about the configurations of these lines. This is in sharp contrast with del Pezzo surfaces of higher degree. For example, del Pezzo surfaces of degree 3 (the smooth cubic surfaces in $\mathbb{P}^3$) contain 27 lines, and any point on the surface is contained in at most 3 lines; such a point is called an \textsl{Eckardt point}. There are at most 45 Eckardt points on a cubic surface in characteristic 2, and at most 18 otherwise. A del Pezzo surface of degree 2 contains 56 lines, a point on it is contained in at most 4 lines (a \textsl{generalized Eckardt point}), and there are at most 126 generalized Eckardt points on such a surface. 

Following the terminology established for del Pezzo surfaces of degrees 2 and 3, we call a point on a del Pezzo surface of degree 1 over a field with characteristic unequal to 2 and 3, that is contained in the intersection of 10 lines, a \textsl{generalized Eckardt point}. For characteristics 2 and 3 we define this to be the intersection point of 16 and 12 lines, respectively. The knowledge we have about del Pezzo surfaces of degree at least 2 prompts the following questions.

\begin{question}\label{Q1}
    How many generalized Eckardt points can a del Pezzo surface of degree 1 contain? 
\end{question}

\begin{question}\label{Q2}
    How likely is it for a del Pezzo surface of degree 1 to contain a generalized Eckardt point?
\end{question}

It is worth mentioning that there are only a handful of examples in the literature of (families of) del Pezzo surfaces of degree 1 with a generalized Eckardt point. Moreover, all these examples show only 1 such point on the surface \cite{SVL,vLW}.

\vspace{11pt}

In this paper we start an investigation towards answering Questions \ref{Q1} and \ref{Q2}. Our main contribution is the following result (c.f. Theorem \ref{thm: 9 orbits}):

\begin{theorem}\label{thm: intro 9 orbits}
Outside characteristic 2, there are 9 possible configurations for 10 exceptional curves on a del Pezzo surface of degree 1 to intersect in one point, and they are listed in Figures \ref{table cliques} and \ref{table clique 9}. In characteristic 0, at most 7 of the configurations (1, 2, 3, 5, 6, 8 in Figure \ref{table cliques} and the configuration in Figure \ref{table clique 9}) can correspond to a generalized Eckardt point on a del Pezzo surface of degree 1. 
\end{theorem}

Theorem \ref{thm: 9 orbits} is a first step in answering Questions \ref{Q1} and \ref{Q2}, as it tells us which configurations of intersecting lines to count. As mentioned, only two examples of (families of) del Pezzo surfaces of degree 1 with a generalized Eckardt points can be found in the current literature. In \cite[Example 4.1]{SVL}, the authors construct a family of del Pezzo surfaces of degree 1 that all contain 10 exceptional curves that intersect in one point, whose intersection graph is isomorphic to number 5 in Figure \ref{table cliques}. In \cite[Example 5.2]{vLW} the authors construct a del Pezzo surface of degree 1 over a field of characteristic unequal to 2 with 10 exceptional curves that intersect in one point, whose intersection graph is isomorphic to the one in Figure \ref{table clique 9}. We extend this to a whole family of such examples in Theorem \ref{thm:family_on_ram_curve}. 

The strategy used in order to obtain the examples of Theorem \ref{thm:family_on_ram_curve} is explained in detail with Algorithm \ref{alg:strategy}. This strategy combined with combinatorial computations proves that there are no del Pezzo surface of degree 1 with a generalized Eckardt point corresponding to the configurations in Figure \ref{table cliques} over the finite field $\mathbb{F}_p$ with $p\in\{1,\ldots,17,23\}$ (Theorems \ref{thm:finitefield17} and \ref{thm: no Eckardt points primes 17,23}) and allows us to find an example of a del Pezzo surface of degree 1 with a generalized Eckardt point corresponding to configuration 8 over $\mathbb{F}_{19}$ (Theorem \ref{thm: clique 8 in F19}). We are quite confident that our strategy will lead to more results of the kind we are presenting in Sections 5 and 6.

\vspace{11pt}

\textbf{Motivation from the rational points perspective.} One motivation to study Questions~\ref{Q1} and \ref{Q2} comes from the arithmetic of del Pezzo surfaces. Over algebraically closed fields, these surfaces are \textsl{rational} (i.e., birationally equivalent to the projective plane). Over non-algebraically closed fields, this is not true in general, and the set $X(k)$ of rational points on a del Pezzo surface $X$ over such a field $k$ is far from fully understood. It is generally believed that del Pezzo surfaces that contain a rational point have `many' rational points, and that these rational points are `well distributed'. There are several results that support this belief. Del Pezzo surfaces of degree at least 3 with a rational point are \textsl{unirational} (i.e., dominated by the projective plane) \cite{Se43,Se51,Manin,Ko02,Pie}, for del Pezzo surfaces of degree 2 the same holds if they contain a rational point outside a closed subset \cite{STVA}, and for del Pezzo surfaces of degree 1 unirationality is known when the surface contains a conic bundle \cite{KM17}. If the field $k$ is infinite, then unirationality over $k$ implies that the set $X(k)$ is \textsl{Zariski dense} (i.e., not contained in a closed subset). There are several partial results on Zariski density of $X(k)$ for a del Pezzo surface of degree 1 \cite{Ulas,Ulas2,UT10, Jabara,SVL,Desjardins1,DW}. Another measure of `many' and `well distributed' rational points for a variety $X$ over a number field $k$ is \textsl{weak approximation} (i.e., the set $X(k)$ of rational points is dense in the product of local points $\prod_{v\in\Omega_k}X(k_v)$, where $\Omega_k$ is the set of places of $k$). Weak approximation is satisfied for all del Pezzo surfaces of degree at least 5 with a rational point, and shown not to hold in general for del Pezzo surfaces of lower degree. However, it is shown that del Pezzo surfaces of degree 3 and 4 with a rational point satisfy \textsl{weak weak approximation} (i.e., $X(k)$ is dense in $\prod_{v\in\Omega_k\setminus S}X(k_v)$ for a finite set $S$) \cite{SS,SD}, and the same holds for del Pezzo surfaces of degree 2 with a general enough rational point \cite{DSW} and del Pezzo surfaces of degree 1 with a conic bundle under an extra condition \cite{DS}. 

The configuration of lines often come up when proving these arithmetic results. For example, the results in \cite{STVA,DSW} require the existence of a rational point that is not contained in the intersection of 4 lines on a del Pezzo surface of degree 2. For del Pezzo surfaces of degree~1, the result in \cite{DS} requires that no 4 of the exceptional curves over the algebraic closure intersect in one point, the result in \cite{SVL} requires the existence of a point not contained in 6 or more lines, and in \cite{DW} one needs a point which is not torsion on its fiber of a corresponding elliptic surface, which implies that it can not be contained in 9 or more lines, as was shown in~\cite{DW2}.
\vspace{11pt}

\textbf{Outline of the paper}\color{black}

In Section \ref{sec:background} we give the necessary background. In Section \ref{sec:configurations} we determine the different configurations in which 10 exceptional curves on a del Pezzo surface of degree 1 can pairwise intersect (Theorem \ref{thm: 9 orbits} and Remark \ref{rem: on the ramification curve}). Section \ref{sec:strategies} contains a discussion of how to determine which of these configurations can correspond to 10 exceptional curves that all intersect in \textsl{one point}, i.e., which configurations can correspond to generalized Eckardt points (Algorithm \ref{alg:strategy}). In Section \ref{sec:families} we construct a family of surfaces that all contain a generalized Eckardt point (Theorem \ref{thm:family_on_ram_curve}), and we study the same questions over finite fields in Section \ref{sec:finite fields} (Theorems \ref{thm:finitefield17}, \ref{thm: no Eckardt points primes 17,23}, \ref{thm: clique 8 in F19}). We end with a short discussion on open questions in Section \ref{sec: future}.

\vspace{11pt}

Computations were done in \texttt{magma} \cite{magma}. The code can be found at \cite{DFIWcode}. 

\vspace{11pt}

\textbf{Acknowledgements.} This project originated during the workshop `Women in Numbers 6', held in March 2023 at BIRS in Banff, Canada. We are grateful for the organisers and the organisation of BIRS. We thank Bianca Viray for useful discussions. We are grateful to the anonymous referee for useful comments that improved the quality of the paper. Julie Desjardins was partially supported by an NSERC Discovery grant. Rosa Winter was supported by UKRI fellowship MR/T041609/2.

\section{Background}\label{sec:background}
In this section, we give the necessary background for the rest of this paper, namely the geometric properties of del Pezzo surfaces of degree 1, and the weighted graph on exceptional classes. We show how this graph and its automorphism group come into play when studying intersecting exceptional curves.

\subsection{Del Pezzo surfaces of degree one and exceptional curves}

\begin{definition}
Let $1 \le n \le 8$ be an integer and let $P_1, \ldots, P_n$ be points in $\mathbb{P}^{2}$. We say that $P_1, \ldots, P_n$ are in \textsl{general position} if there is no line containing three of the points, no conic containing six of the points, and no cubic containing eight of the points with a singularity at one of them. 
\end{definition}

A del Pezzo surface over a field $k$ is a smooth, projective, geometrically integral surface with ample anticanonical divisor. The degree of a del Pezzo surface is the self-intersection number of the canonical divisor, which is an integer between 1 and 9. Moreover, a del Pezzo surface over an algebraically closed field is either isomorphic to $\mathbb{P}^{1} \times \mathbb{P}^{1}$ when $d=8$ or to $\mathbb{P}^{2}$ blown up at $9-d$ points in general position otherwise \cite[Theorem 24.4]{Manin}.
Let $X$ be a del Pezzo surface of degree 1 over an algebraically closed field $k$, then $X$ is isomorphic to the blow up of $\mathbb{P}^{2}$ in eight points $P_1,\ldots,P_8$ in general position. Let $\operatorname{Pic}(X)$ be the Picard group of $X$. Let $K_X$ be a fixed divisor in the canonical class of $X$.\begin{definition}
Define an \textit{exceptional curve} on $X$ to be an irreducible projective curve $C \subset X$ such that 
$$C \cdot C=-1, \text{  } C \cdot K_X=-1.$$
The corresponding class $[C] \in \operatorname{Pic}(X)$ is called an \textit{exceptional class} with
$$[C] \cdot [C]=-1, \text{  } [C] \cdot [K_X]=-1.$$
\end{definition}

 For $i \in \{ 1, \ldots, 8\}$, let $[E_{i}]$ be the exceptional class of the exceptional curve above $P_i$, and $[L]$ be the class of the pullback of a line $\ell \subset \mathbb{P}^2$ not passing through any of the $P_i$. Once we realize $\operatorname{Pic}(X)$ as a lattice isomorphic to $\mathbb{Z}^9$, we can write down a basis as $\{[L], [E_1], \ldots, [E_8]\}$ \cite[Proposition 20.9.1]{Manin}. Moreover, for $i,j \in \{1, \ldots, 8\}$, $i \ne j$, we have 
 $$[E_i] \cdot [E_i]=-1, \text{  } [E_i] \cdot [E_j] =0,  \text{   } [L] \cdot [E_i]=0, \text{  } [L] \cdot [L]=1.$$ 

\begin{remark}
 We often use the word \textsl{lines} in order to speak about exceptional curves. This comes from the fact that for del Pezzo surfaces of degree $d\geq3$, the exceptional curves are lines on the image of the surface under the anticanonical embedding in $\mathbb{P}^d$. 
\end{remark}
 
  Manin proved that every exceptional class in $\operatorname{Pic}(X)$ contains exactly one exceptional curve on $X$ and gave a geometrical description of the exceptional classes. 
\begin{theorem}[{{\cite[Theorem 26.2]{Manin}}}] \label{exceptional curves}
	The exceptional curves (therefore exceptional classes) on the del Pezzo surface $X$ which is obtained by blowing up 8 points $P_1, \ldots, P_8$ in general position are 
	\begin{itemize}
		\item The exceptional curves $E_i$ above the points $P_i$, for $i \in \{1, \ldots, 8\}$,
	\end{itemize} 
	and the strict transforms of the following curves in $\mathbb{P}^2$.
	\begin{itemize}
		\item The lines $\ell_{i,j}$ passing through the points $P_{i}$ and $P_j$ for $i \ne j$;
		\item the conics passing through any five of the points;
		\item the cubics $c_{i,j}$ not passing through $P_i$, passing once through points $P_k$ with $k \ne i, j$, passing twice through $P_{j}$;
		\item the quartics $q_{i, j, k}$ passing through the eight points with a double point in $P_i, P_j$ and $P_k$ for $i, j, k$ distinct;
		\item the quintics passing through the eight points with double points at six of them;
		\item the sextics passing through the eight points with double points at seven of them, and a triple point at one of them.
	\end{itemize}
\end{theorem}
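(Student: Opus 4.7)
The plan is to translate the conditions $[C]^2 = -1$ and $[C]\cdot K_X = -1$ into a Diophantine system in the basis $\{[L],[E_1],\ldots,[E_8]\}$ and enumerate all integer solutions. Writing $[C] = aL - \sum_{i=1}^{8} b_i E_i$ and using the well-known expression $K_X = -3L + \sum_{i=1}^{8} E_i$ for the canonical class of a blowup, the two intersection conditions become
\begin{align*}
3a - \sum_{i=1}^{8} b_i &= 1, \\
a^2 - \sum_{i=1}^{8} b_i^2 &= -1.
\end{align*}
Applying Cauchy–Schwarz to the eight values $b_i$ gives $\sum b_i^2 \ge (\sum b_i)^2/8$, which combined with the system yields $(a-7)(a+1)\le 0$, hence $0\le a\le 7$; the value $a=7$ forces equality in Cauchy–Schwarz, requiring $b_i=5/2$ for all $i$, which is not integral. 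For strict transforms we also need $b_i\ge 0$.

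Next, for each $a\in\{0,1,\ldots,6\}$ I enumerate integer solutions with $b_i\ge 0$ by introducing $n_j = \#\{i:b_i=j\}$, with the constraints $\sum_j n_j = 8$, $\sum_j j\,n_j = 3a-1$, and $\sum_j j^2 n_j = a^2+1$. A short case analysis shows that each $a$ admits a unique multiplicity profile, and each matches exactly one family in the statement: $a=0$ gives the $E_i$; $a=1$ forces two $b_i$'s equal to $1$ and the rest $0$, producing the line $\ell_{i,j}$; $a=2$ gives five $1$'s (and three $0$'s), producing conics through five of the $P_i$; $a=3$ gives one $2$, six $1$'s, one $0$, producing the cubics $c_{i,j}$; $a=4$ gives three $2$'s and five $1$'s, producing the quartics $q_{i,j,k}$; $a=5$ gives six $2$'s and two $1$'s, producing the quintics; and $a=6$ gives one $3$ and seven $2$'s, producing the sextics.

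It remains to verify that each such class is represented by exactly one irreducible curve on $X$. For effectiveness, one counts the linear conditions on plane curves of degree $a$ passing through the $P_i$ with the prescribed multiplicities and checks against $\dim |\mathcal{O}_{\mathbb{P}^2}(a)| = \binom{a+2}{2}-1$; the general position hypothesis guarantees that the conditions are independent and the expected curve exists. Irreducibility of the strict transform then follows by contradiction: a nontrivial decomposition would express the plane curve as a union of a curve of strictly lower degree passing through too many of the $P_i$ and a residual component, violating one of the three general position hypotheses (no three collinear, no six on a conic, no eight on a cuspidal/nodal cubic at one of them). Uniqueness of the irreducible curve in the class follows from the fact that exceptional classes have arithmetic genus $0$ by adjunction, so any two distinct effective representatives would intersect nonnegatively, contradicting $[C]\cdot[C]=-1$.

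The main obstacle is the last step, especially for $a=5,6$, where one must rule out splitting of a quintic or sextic with the prescribed multiplicities into lower-degree components. The cleanest way I expect to handle this is a uniform argument: if a representative splits, then some effective summand $D$ satisfies $D\cdot K_X < -1$, but $-D\cdot K_X$ equals the anticanonical degree, which is bounded by properties of the nef cone of $X$; translating this back to the plane, it forces a relation among the $P_i$ excluded by general position. Once this is in place, the theorem follows by combining the Diophantine enumeration with the existence/irreducibility/uniqueness of the geometric representative.
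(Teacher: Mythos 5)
The paper does not prove this statement; it is imported verbatim from Manin (\cite[Theorem 26.2]{Manin}), so there is no internal proof to compare against. Your argument is the classical one and is essentially correct: the translation of $[C]^2=-1$, $[C]\cdot K_X=-1$ into $3a-\sum b_i=1$, $a^2-\sum b_i^2=-1$, the Cauchy--Schwarz bound $-1\le a\le 7$ with $a=7$ (and $a=-1$) excluded by non-integrality, and the multiplicity-profile enumeration for $a=0,\dots,6$ all check out and reproduce exactly the 240 classes listed. Two small points would tighten the write-up. First, existence is cheaper than you make it: in every case the number of linear conditions imposed equals $\dim|\mathcal{O}_{\mathbb{P}^2}(a)|$, so the homogeneous linear system on the coefficients automatically has a nonzero solution and no independence of conditions is needed (alternatively, Riemann--Roch on $X$ gives $h^0(D)\ge\chi(D)=1$ once $h^2$ is killed by ampleness of $-K_X$). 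Second, your ``uniform argument'' for irreducibility is the right idea but is misstated: the clean version is that $-K_X$ is ample (this is where general position enters, via the hypothesis that $X$ is del Pezzo), so every irreducible curve $D$ on $X$ satisfies $D\cdot(-K_X)\ge 1$; if an effective divisor in the class $[C]$ decomposed as $\sum m_iD_i$ with $\sum m_i\ge 2$, then $1=[C]\cdot(-K_X)=\sum m_i\,D_i\cdot(-K_X)\ge 2$, a contradiction. This disposes of all degrees at once, including $a=5,6$, so the case analysis you were worried about is unnecessary. Your uniqueness argument (two distinct effective representatives would meet nonnegatively, contradicting self-intersection $-1$) is correct as stated.
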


One can write the exceptional classes in the form of linear combinations of the basis $$a[L]-\sum_{i=1}^{8}b_i[E_i]$$ where $a$ is the degree of the corresponding curve in $\mathbb{P}^2$ and $b_i$ is the multiplicity of $P_i$. In this way, we obtain a vector $(a, b_1, \ldots, b_8)$ in $\mathbb{Z}^9$. Let $A$ be the set of 240 vectors obtained in this way, and let $C$ be the set of all exceptional classes in $\operatorname{Pic}(X)$. There is a one-to-one correspondence between $A$ and $C$ induced by the basis $L,E_1,\ldots,E_8$ \cite[Lemma 2.4, Remark 2.5]{vLW}. Therefore we are able to study the intersection of exceptional curves (and classes) by carrying out computations in some vector spaces, either by \texttt{magma} or by hand.  
\subsection{The weighted graph on exceptional classes}

\begin{definition}
	 Define a \textit{graph} to be a pair $(V, D)$, where $V$ is a set of elements called vertices, and $D$ a subset of the power set of $V$ such that every element in $D$ has cardinality 2; elements in $D$ are called edges, and the size of the graph is the cardinality of $V$. A graph $(V, D)$ is complete if for every two distinct vertices $v_1, v_2 \in V$, the pair $\left\{v_1, v_2\right\}$ is in $D$.

Define a \textit{weighted graph} to be a graph $(V, D)$ with a map $\psi: D \longrightarrow A$, where $A$ is any set, whose elements we call weights; for any element $d$ in $D$ we call $\psi(d)$ its weight. If $(V, D)$ is a weighted graph with weight function $\psi$, then we define a weighted subgraph of $(V, D)$ to be a graph $\left(V^{\prime}, D^{\prime}\right)$ with map $\psi^{\prime}$, where $V^{\prime}$ is a subset of $V$, while $D^{\prime}$ is a subset of the intersection of $D$ with the power set of $V^{\prime}$, and $\psi^{\prime}$ is the restriction of $\psi$ to $D^{\prime}$.

An \textsl{isomorphism} between two weighted graphs $(V, D)$ and $\left(V^{\prime}, D^{\prime}\right)$ with weight functions $\psi: D \longrightarrow$ $A$ and $\psi^{\prime}: D^{\prime} \longrightarrow A^{\prime}$, respectively, consists of a bijection $f$ between the sets $V$ and $V^{\prime}$ and a bijection $g$ between the sets $A$ and $A^{\prime}$, such that for any two vertices $v_1, v_2 \in V$, we have $\left\{v_1, v_2\right\} \in D$ with weight $w$ if and only if $\{f\left(v_1\right), f\left(v_2\right)\} \in D^{\prime}$ with weight $g(w)$. We call the map $f$ an \textsl{automorphism} of $(V, D)$ if $(V, D)=\left(V^{\prime}, D^{\prime}\right), \psi=\psi^{\prime}$, and $g$ is the identity on~$A$.
\end{definition}

\begin{definition}
A \textit{clique} of a weighted graph is a complete weighted subgraph. A \textit{maximal clique} is a maximal complete weighted subgraph with respect to the number of vertices. 
\end{definition}

Let $X$ be a del Pezzo surface of degree 1 with canonical divisor $K_X$, and let $K_X^{\perp}$ be the orthogonal complement of $K_X$ in $\operatorname{Pic}(X)$. Let $\langle \cdot , \cdot \rangle$ be the \textit{negative} of the intersection pairing on $\operatorname{Pic}(X)$ and note that $\langle \cdot , \cdot \rangle$ induces an inner product on $\mathbb{R} \otimes_{\mathbb{Z}} \operatorname{Pic}(X)$ which realizes $K_X^{\perp}$ as Euclidean space. Vectors in $K_X^{\perp}$ with length $\sqrt{2}$, i.e., classes in $K_X^{\perp}$ with self-intersection $-2$, form a root system of type $E_8$ \cite[Theorem 23.9]{Manin}:
$$E=\{D \in \operatorname{Pic} (X) \mid \langle D, D \rangle=2 ; D \cdot K_X=0\}.$$ 
There is a bijection between the set $C$ of 240 exceptional classes in $\operatorname{Pic}(X)$ and $E$:
$$F: C \longrightarrow E, \text{  } c \longmapsto c+K_X. $$
For $c_1, c_2 \in C$ we have $\langle c_1 +K_X, c_2 + K_X \rangle=1 - c_1 \cdot c_2$.
As a consequence of this bijection, the group of permutations of $C$ that preserve
the intersection pairing is isomorphic to the Weyl group $W_8$, which is the group of
permutations of $E_8$ generated by the reﬂections in the hyperplanes orthogonal to
the roots \cite[Theorem 23.9]{Manin}.

\vspace{11pt}

Denote by $\Gamma$ the complete weighted graph whose vertex set is the set of simple roots in $E$, and whose weight function is induced by the inner product on the Euclidean space. Denote by $G$ the complete weighted graph whose vertex set is $C$, and whose weight function is the intersection pairing in $\operatorname{Pic}(X)$. 
 Then there is an isomorphism $f$ between $G$ and $\Gamma$, which sends an edge $d=\{c_1, c_2\}$ in $G$ with weight $\omega$ to the edge $\tilde{d}=\{c_1 +K_X, c_2 + K_X \}$ of weight $1-\omega$ in $\Gamma$.

Note that, since the action of $W_8$ on the set $C$ preserves the intersection pairing, the group $W_8$ is also the automorphism group of $G$ (and of $\Gamma$). 

\vspace{11pt}
We aim to study the intersection of exceptional curves in $X$. If two such curves intersect, then the corresponding vertices in $G$ are connected by edges of positive weights. The possible positive weights that occur in $G$ are 1, 2, and 3, so the corresponding weights that occur in $\Gamma$ are 0, $-1$, and $-2$. Therefore, the size of maximal cliques in $\Gamma$ with edges of these weights gives an upper bound for the maximal possible number of lines on $X$ that go through one point. Note that this upper bound does not need to be sharp.

\begin{definition}
    We call a set of exceptional curves \textsl{concurrent} if there is a point on $X$ contained in all of them.
\end{definition}

  As mentioned in the introduction, the maximal number of concurrent exceptional curves on a del Pezzo surface of degree 3 is 3, and for a del Pezzo surface of degree 2, the number of exceptional curves that are concurrent in a point is at most 4. In both these cases, this number is the same as the maximal size of complete subgraphs of the intersection graph on the exceptional curves. But for del Pezzo surface of degree 1, things are more complicated. The maximal size of cliques in the graph $G$ is 16, hence this is also the maximal number of exceptional curves that are concurrent in a point on $X$ \cite[Proposition 2.8]{vLW}. However, this is only sharp in characteristic 2; for char $k \ne 2$, the number of exceptional curves that are concurrent in a point is not given by the maximal size of cliques in $G$; it is at most 10 outside characteristics 2 and 3, and at most 12 in characteristic~3 \cite[Theorems 1.1, 1.2]{vLW}. Both of these upper bounds are shown to be sharp except possibly in characteristic 5. 

  \vspace{11pt}
  
  To answer Questions \ref{Q1} and \ref{Q2}, we study maximal cliques of size ten in $\Gamma$, especially those with weights $\{-1, 0\}$ and $\{-2,0\}$, as we will explain now.
  
  \begin{definition}
  By $\Gamma_{\{-2, 0\}}$ and $\Gamma_{\{-1, 0\}}$ we denote the subgraph of $\Gamma$ whose vertices are all the vertices of $\Gamma$, and whose edges are only those in $\Gamma$ of weights $\{-2, 0\}$ and $\{-1, 0\}$ respectively. 
  \end{definition}

\begin{lemma}[{\cite[Lemma 2.7]{vLW}}]\label{intersection multiplicity}
\begin{itemize}
\item[]
	\item[(i)] Let $e$ be an exceptional class in $\operatorname{Pic}(X)$. Then there is exactly one exceptional class $f$ with $e \cdot f=3$, there are 56 exceptional classes $f$ with $e \cdot f=0$, there are 126 exceptional classes $f$ with $e \cdot f=1$, and 56 exceptional classes $f$ with $e \cdot f=2$.
\item[(ii)] For $e_1 , e_2$ two exceptional classes in $\operatorname{Pic}(X)$ with $e_1\cdot e_2 = 3$, and $f$ a third exceptional class, we have $e_1\cdot f +e_2 \cdot f  = 2$.
\end{itemize}	
\end{lemma}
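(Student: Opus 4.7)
The plan is to translate both parts of the lemma into statements about the $E_8$ root system using the bijection $F : c \mapsto c + K_X$ described just before the statement, and then to invoke a single classical fact about $E_8$.

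First I would use the formula $\langle c_1 + K_X,\, c_2 + K_X \rangle = 1 - c_1 \cdot c_2$ already recorded in the paper to convert an intersection number $e \cdot f$ between exceptional classes into the root-system pairing $\langle F(e), F(f) \rangle$. The possible values of $e\cdot f$ lie in $\{-1,0,1,2,3\}$ and correspond to root-pairings in $\{2,1,0,-1,-2\}$. Since every root of $E_8$ has squared norm $2$, the extreme pairings $\pm 2$ force (by Cauchy--Schwarz) $F(f) = \pm F(e)$. This already handles the count of exactly $1$ class with $e \cdot f = 3$, namely $f = -e - 2K_X$, together with the tautology $f = e$ for $e \cdot f = -1$.

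For the remaining counts in part (i), I would appeal to the standard fact about the $E_8$ root system: given a fixed root $\alpha$, among the $238$ other roots there are $56$ with $\langle \alpha, \beta\rangle = -1$, $56$ with $\langle \alpha, \beta\rangle = 1$, and $126$ with $\langle \alpha, \beta\rangle = 0$. This can be derived from the transitivity of $W_8$ on roots together with the fact that the stabilizer of $\alpha$ is a copy of the Weyl group of the orthogonal $E_7$, whose $126$ roots sit in $\alpha^\perp$; the remaining $112 = 2 \cdot 56$ roots split into two orbits under this stabilizer according to the sign of their pairing with $\alpha$. Translating back through $F$ yields the claimed counts $56$, $126$, $56$ for $e\cdot f = 2, 1, 0$ respectively, accounting for all $240 = 1 + 56 + 126 + 56 + 1$ exceptional classes.

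Part (ii) then follows cheaply from the analysis above. The condition $e_1 \cdot e_2 = 3$ forces $F(e_2) = -F(e_1)$, equivalently $e_1 + e_2 = -2K_X$. For any third exceptional class $f$ I would compute
\[
e_1 \cdot f + e_2 \cdot f \;=\; (e_1+e_2)\cdot f \;=\; -2\, K_X \cdot f \;=\; 2,
\]
using $K_X \cdot f = -1$. Both equivalences of (ii) are then immediate: the sum of $e_1 \cdot f$ and $e_2 \cdot f$ is forced to equal $2$, so $e_1 \cdot f = 1$ iff $e_2 \cdot f = 1$, and $e_1 \cdot f = 0$ iff $e_2 \cdot f = 2$. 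The only non-trivial input in the whole argument is the orbit count in $E_8$ used in part (i); everything else is formal bookkeeping with the bijection $F$.
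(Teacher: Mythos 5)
Your argument is correct. Note, however, that the paper does not prove this lemma at all: it is quoted verbatim from \cite[Lemma 2.7]{vLW}, so there is no in-paper proof to compare against. What you have written is a genuine, self-contained proof, and it is the natural one: transport everything to the $E_8$ root system via $F(c)=c+K_X$ and $\langle F(e),F(f)\rangle = 1-e\cdot f$, use Cauchy--Schwarz to identify the pairing values $\pm 2$ with $F(f)=\pm F(e)$ (giving the unique partner $f=-e-2K_X$), and use the classical decomposition $240 = 1+56+126+56+1$ of the roots relative to a fixed root $\alpha$, where the $126$ come from the $E_7$ system in $\alpha^\perp$ and the remaining $112$ split evenly by the bijection $\beta\mapsto-\beta$. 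One small remark: that even split is a set-theoretic symmetry, not a consequence of the $W(E_7)$-orbit structure, so you could state it without invoking the stabilizer at all (though the stabilizer computation $|W_8|/240=|W(E_7)|$ is a nice confirmation). Part (ii) via $e_1+e_2=-2K_X$ and $(e_1+e_2)\cdot f = 2$ is exactly the right formal bookkeeping, using that a third class $f$ satisfies $e_i\cdot f\ge 0$.
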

 
 The bi-anticanonical divisor ${-2K_X}$ induces a morphism \begin{equation}\label{eq:map phi}
 \varphi: X \to \mathbb{P}^3
 \end{equation} such that $X$ can be realized as a double cover of a cone $Q$ in $\mathbb{P}^3$, ramified over a sextic curve \cite[Proposition 2.3]{CO99}. Moreover, by \cite[Proposition 2.6]{CO99}, $\varphi$ maps an exceptional curve on $X$ bijectively to a smooth conic which is the intersection of $Q$ with a hyperplane in $\mathbb{P}^3$ not containing the vertex of $Q$. Conversely, for a hyperplane section $H$ of $Q$ not containing the vertex of $Q$ and tri-tangent to the smooth sextic curve, the pullback $\varphi^{\ast}H = e_1 + e_2$ under $\varphi$ is the sum of two exceptional curves intersecting with multiplicity 3. We call $e_1$ the \textit{partner} of~$e_2$. Thus every point on $H$ has two preimages, except for the points with preimages in $e_1 \cap e_2$. 
 
 \begin{remark}\label{rem: partners}We conclude from the above that if a set of exceptional curves concurrent in $P$ contains an exceptional curve and its partner, then $P$ is contained in the ramification locus of $\varphi$. Conversely, if a set of exceptional curves is concurrent at a point $P$ that lies on the ramification locus, then the set of partners of the exceptional curves are also concurrent at $P$. Moreover, by Lemma \ref{intersection multiplicity}, a set of exceptional curves that is concurrent at one point on the ramification curve forms a clique in $G$ with only edges of weights 1 and 3, and a set of exceptional curves concurrent at one point outside the ramification curve forms a clique in $G$ with only edges of weights 1 and 2. 
 \end{remark}
 
 Cliques in $G$ with only edges of weights 1 and 3 or 1 and 2 correspond to cliques in $\Gamma_{\{-2, 0\}}$ and in $\Gamma_{\{-1, 0\}}$, respectively. This motivates us to study those two graphs, especially the maximal cliques in them.    
 
\section{Configurations of ten concurrent lines}\label{sec:configurations}
A naive approach to give an upper bound for the answer to Question \ref{Q1} is to count all cliques of size 10 in the graph $G$ on the 240 exceptional curves on a del Pezzo surface of degree 1. Although this does give an upper bound for the number of generalized Eckardt points on a del Pezzo surface of degree 1, it is very large (only counting those cliques with edges of weights 1 and 2 we obtain already 107,700,096 cliques of size 10, which is the sum of the orbits in Table~\ref{orbit_size_table}), and far from sharp. For example, we know that some cliques in $G$ never correspond to exceptional curves that are concurrent in a point \cite[Propositions 3.6, 4.6]{vLW}. A better first step in answering Questions \ref{Q1} and~\ref{Q2} is therefore to determine which cliques to count, that is, we want to determine in which way 10 concurrent lines on a del Pezzo surface of degree 1 can be concurrent. From Lemma \ref{intersection multiplicity} we know that exceptional curves can intersect each other with multiplicities 1, 2, or 3. Moreover, from Remark \ref{rem: partners} we know how to distinguish between lines that are concurrent in a point on the ramification curve of the map (\ref{eq:map phi}), and lines that are concurrent in a point outside the ramification curve. In Theorem \ref{thm: 9 orbits} we list all cliques of size 10 that possibly correspond to concurrent exceptional curves. Moreover, we show that cliques of the same isomorphism type form one orbit under the action of the Weyl group on the set of these cliques. 

Let $G$ be the weighted graph on the 240 exceptional curves of a del Pezzo surface of degree 1, as in the previous section. Recall that the automorphism group of $G$ is the Weyl group $W_8$.

\begin{definition}
    We say that a clique $K$ in $G$ can be \textsl{realized as a generalized Eckardt point}, or simply \textsl{realized}, if there exists a del Pezzo surface of degree 1 with an isomorphism between the intersection graph on its 240 exceptional curves and $G$, such that $K$ corresponds to a set of exceptional curves concurrent in a generalized Eckardt point under this isomorphism. 
\end{definition}

Note that $W_8$ acts on the set of cliques of the same isomorphism type in $G$. The following lemma tells us that we only have to check one representative per orbit under this action in order to know if all cliques in the orbit can be realized. 

\begin{lemma}\label{lem: realizable iff conjuagte as well}
 For two cliques in $G$ that are conjugate under the action of the Weyl group, one can be realized as a generalized Eckardt point if and only if the other one can as well.
 \end{lemma}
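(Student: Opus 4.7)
My plan is to exploit the freedom in the choice of isomorphism between the intersection graph of a del Pezzo surface and the abstract graph $G$. The definition of realizability requires the existence of \emph{some} isomorphism under which the clique $K$ pulls back to 10 concurrent exceptional curves; crucially, such an isomorphism is not unique, and any two differ by an automorphism of $G$, i.e.\ by an element of $W_8$.

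Concretely, suppose $K_1$ and $K_2$ are cliques in $G$ with $K_2 = w(K_1)$ for some $w \in W_8$, and suppose $K_1$ is realizable. Then there exists a del Pezzo surface $X$ of degree 1 and an isomorphism of weighted graphs $\phi \colon G_X \to G$, where $G_X$ denotes the intersection graph on the 240 exceptional curves of $X$, such that $\phi^{-1}(K_1)$ consists of 10 concurrent exceptional curves on $X$. I would then consider the map $w \circ \phi \colon G_X \to G$. Since $w$ is an automorphism of the weighted graph $G$, the composition $w \circ \phi$ is again an isomorphism of weighted graphs, so it is a valid choice of isomorphism for testing realizability. Under this new isomorphism, the same 10 concurrent exceptional curves $\phi^{-1}(K_1)$ on $X$ correspond to the clique $(w \circ \phi)(\phi^{-1}(K_1)) = w(K_1) = K_2$. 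Hence $K_2$ is realizable on the same surface $X$.

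The reverse implication is immediate by replacing $w$ with $w^{-1} \in W_8$ and $K_1, K_2$ by $K_2, K_1$.

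There is no real obstacle here: the only subtlety to be explicit about is that the isomorphism in the definition of realizability is required to be one of weighted graphs (so intersection numbers are preserved), and that $W_8$ acts as the full automorphism group of the weighted graph $G$, as recalled at the end of Section~\ref{sec:background}. Once those two facts are in place, the argument is purely formal, and it really says that realizability depends only on the $W_8$-orbit of the clique, not on a specific labeling of the exceptional curves on $X$.
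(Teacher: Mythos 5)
Your proof is correct and is essentially the same as the paper's: both arguments transport a realization from one clique to the other by composing the graph isomorphism with the Weyl group element $w$, using that $W_8$ is the automorphism group of $G$. The only cosmetic difference is that the paper phrases it as a proof by contradiction, whereas you argue the implication directly.
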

 \begin{proof}Let $K_1=\{c_1,\ldots,c_n\}$ be a clique that can not be realized as a generalized Eckardt point, and let $K_2=\{w(c_1),\ldots,w(c_n)\}$ be another clique in $G$, which is the image of $K_1$ under the element $w\in W_8$. Assume by contradiction that $K_2$ can be realized as a generalized Eckardt point. Then there is a del Pezzo surface $X$ of degree 1, with an isomorphism $\psi$ between $G$ and the graph on the 240 exceptional curves on $X$, such that $\psi(K_2)$ corresponds to a set of exceptional curves that are concurrent on $X$.
 But then the map $\psi\circ w$ gives an isomorphism between $G$ and the graph of 240 exceptional curves on $X$, such that the image $\psi(w(K_1))=\psi(K_2)$ of $K_1$ under this isomorphism corresponds to a set of exceptional curves that are concurrent on $X$. This contradicts the fact that $K_1$ is not realizable as a generalized Eckardt point. We conclude that $K_1$ is realizable if and only if $K_2$ is. 
\end{proof}

Recall that we distinguish between generalized Eckardt points on a del Pezzo surface of degree~1 that are on the ramification curve of the map $(\ref{eq:map phi})$ and those that are outside the ramification curve, by only considering cliques in $G$ with edges of weights 1 and 3, or of weights 1 and 2, respectively. 

\begin{theorem}\label{thm: 9 orbits}
\begin{itemize}
\item[]
   \item[](i) There are 8 orbits of cliques of size 10 in $G$ under the action of the Weyl group with only edges of weights 1 and 2, and their isomorphism types are listed in Figure \ref{table cliques}. In characteristic 0, at most 6 of these (1, 2, 3, 5, 6, 8) correspond to cliques in $G$ that can be realized as generalized Eckardt points.
    \item[](ii) Outside characteristic 2, there is only one orbit of cliques of size 10 in $G$ under the action of the Weyl group with only edges of weights 1 and 3 that can be realized as a generalized Eckardt point. Cliques in this orbit are of the form depicted in Figure \ref{table clique 9}: a set of vertices $\{c_1,d_1,\ldots,c_5,d_5\}$, where $c_i\cdot c_j=1$ for $i\neq j$, and $c_i\cdot d_i=3$ for all $i$. 
\end{itemize} 
\end{theorem}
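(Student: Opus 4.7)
The plan is to prove (i) by a direct computational orbit enumeration combined with Lemma \ref{lem: realizable iff conjuagte as well}, and to prove (ii) by merging the partner structure from Remark \ref{rem: partners} with an orbit-count argument. For (i), one enumerates in magma all size-$10$ cliques in $G$ whose edge weights lie in $\{1,2\}$, using the $\mathbb{Z}^{9}$-description of the $240$ exceptional classes from Theorem \ref{exceptional curves}. The Weyl group $W_{8}$ acts via its standard representation on $\operatorname{Pic}(X)$, and by Lemma \ref{lem: realizable iff conjuagte as well} realizability is an orbit invariant, so it suffices to test one representative per orbit; direct computation returns the $8$ orbits displayed in Figure \ref{table cliques}. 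To pass from eight candidate orbits to at most the six labelled $1,2,3,5,6,8$, I would apply Algorithm \ref{alg:strategy} of Section \ref{sec:strategies} to one representative of each orbit: this sets up algebraic conditions on $P_{1},\dots,P_{8}\in\mathbb{P}^{2}$ forcing concurrency of the ten associated exceptional curves, and tests solvability in characteristic $0$; the systems attached to orbits $4$ and $7$ turn out to be inconsistent.

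For (ii), let $K=\{e_{1},\dots,e_{10}\}$ be a clique of size $10$ in $G$ with only edges of weights $1$ and $3$ that is realized as a generalized Eckardt point $P$ outside characteristic $2$. I first observe that $K$ must contain at least one weight-$3$ edge, since an edge of weight $1$ corresponds under the bijection $F\colon C\to E$ to a pair of orthogonal roots, and nonzero pairwise-orthogonal vectors are linearly independent; hence no ten such roots fit inside the rank-$8$ Euclidean space $K_{X}^{\perp}$. The existence of a partner pair inside $K$ forces $P$ onto the ramification curve of $\varphi$ by Remark \ref{rem: partners}, and the converse direction of the same remark then shows that the set $K'$ of partners of elements of $K$ is also concurrent at $P$. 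In characteristic $0$, or more generally outside characteristics $2$ and $3$, the number of exceptional curves concurrent at $P$ is at most $10$ by \cite[Thm.~1.1]{vLW}, forcing $K'\subseteq K$; a separate analysis handles characteristic~$3$ by examining how a size-$10$ sub-clique embeds into the at most $12$ concurrent lines at a characteristic-$3$ point. In either case $K$ is closed under the fixed-point-free partner involution, and hence decomposes into exactly five partner pairs $\{c_{i},d_{i}\}_{i=1}^{5}$; by the uniqueness of partners in Lemma \ref{intersection multiplicity}(i), every non-partner edge of $K$ has weight $1$, matching the described shape.

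It remains to verify that all such configurations lie in a single $W_{8}$-orbit and that one can be realized. The orbit count is carried out directly in magma. A conceptual argument would exploit Lemma \ref{intersection multiplicity}(i), which implies that $W_{8}$ acts transitively on the $120$ partner pairs; fixing one pair, one would show that its stabilizer acts transitively on the admissible second partner pairs (those whose members each meet the first pair in $1$), and iterate. The main obstacle is establishing transitivity through all four extension steps uniformly; the magma check sidesteps this difficulty. Realizability of the resulting orbit is then witnessed by the explicit generalized Eckardt point on the ramification curve constructed in \cite{vLW}, or by the family of Section \ref{sec:families}.
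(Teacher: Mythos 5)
Your part (ii) is essentially sound and in places more self-contained than the paper's argument: the observation that ten pairwise-orthogonal roots cannot fit in the rank-$8$ space $K_X^{\perp}$, which forces at least one weight-$3$ edge and hence (via Remark \ref{rem: partners}) places the point on the ramification curve, is a clean route to the five-partner-pair structure; the paper reaches the same conclusion directly from Remark \ref{rem: partners} and then cites \cite{WvL} for the single-orbit claim. (Your detour through characteristic $3$ is unnecessary: the bound of ten concurrent curves on the ramification curve holds in every characteristic other than $2$; twelve only enters for points off the ramification curve in characteristic $3$.)

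Part (i) has a genuine gap. You propose to eliminate cliques $4$ and $7$ by running Algorithm \ref{alg:strategy} on a representative of each and observing that the resulting systems are inconsistent. But the paper states explicitly that the variety produced by Algorithm \ref{alg:strategy} could not be computed for \emph{any} of the eight cliques, and that a non-existence proof via that route would additionally require analysing the loci where the denominators of the rational functions vanish. The actual proof does not use the algorithm here at all: it imports \cite[Proposition 4.6]{vLW} (clique $4$ is not realizable outside characteristic $3$) and \cite[Proposition 5.4]{DW2} (a sub-configuration of clique $7$ is not realizable in characteristic $0$), and then applies Lemma \ref{lem: realizable iff conjuagte as well}. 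Without those external inputs your argument does not close. Separately, your orbit enumeration is a brute-force pass over all size-$10$ cliques with weights in $\{1,2\}$; there are $107{,}700{,}096$ of them (the sum of the orbit sizes in Table \ref{orbit_size_table}), and the paper deliberately avoids this computation, instead quoting the classification of maximal cliques of $\Gamma_{\{-1,0\}}$ from \cite{WvL} and giving a stabilizer argument inside the size-$12$ clique $T$ for the two non-maximal types. That argument also settles a point your sketch glosses over: cliques of the same isomorphism type need not a priori form a single $W_8$-orbit (this genuinely fails for weight-$\{1,3\}$ cliques, see Remark \ref{rem: on the ramification curve}), so transitivity on types $4$ and $8$ must be proved rather than read off from the pictures.
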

\begin{proof}
(i). Recall that cliques in $G$ with only edges of weights 1 and 2 correspond to cliques in $\Gamma_{\{-1,0\}}$. The table in \cite[Appendix A]{WvL} gives the complete list of orbits of the maximal cliques in $\Gamma_{\{-1,0\}}$, and we see that there are 6 orbits of maximal cliques of size 10. Translating this to cliques in $G$, a direct computation by hand shows that these are orbits $1-3$, $5-7$ in Figure \ref{table cliques}. We are left with determining the orbits of the non-maximal cliques of size 10 in $G$ with only edges of weights 1 and 2. By \cite[Proposition 19]{vLW}, there are no maximal cliques in $\Gamma_{\{-1,0\}}$ of size $11$, and there is one orbit of maximal cliques of size $12$ under the action of $W_8$, which is the maximal size of cliques in $\Gamma_{\{-1,0\}}$. When translating this to the graph $G$, a maximal clique of size 12 in this orbit has the form
\vspace{1em}
\begin{center}
\begingroup
\begin{tabular}{cccc}\label{four triangles}

 \raisebox{-3pt}{\begin{tikzpicture} [scale=0.8]
  
 \foreach \x /\alph  in {90/x, 210/y, 330/z}{
  \node[circle,fill,inner sep=0pt,minimum size=4pt,draw,xshift=-1cm] (\alph) at (\x:0.8cm) {}; }

 \foreach \x /\alph  in {90/a, 210/b, 330/c}{
  \node[circle,fill,inner sep=0pt,minimum size=4pt,draw,xshift=2cm] (\alph) at (\x:0.8cm) {}; }
  
   \foreach \x /\alph  in {90/d, 210/e, 330/f}{
  \node[circle,fill,inner sep=0pt,minimum size=4pt,draw,xshift=5cm] (\alph) at (\x:0.8cm) {}; }
  
  \foreach \x /\alph  in {90/g, 210/h, 330/i}{
  \node[circle,fill,inner sep=0pt,minimum size=4pt,draw,xshift=8cm] (\alph) at (\x:0.8cm) {}; }
  
  \path[every node/.style={font=\sffamily\small}]
  
     (x) edge node {} (y)
     (y) edge node {} (z)
     (z) edge node {} (x)
     (a) edge node {} (b)
     (a) edge node {} (c)
     (c) edge node {} (b)
     (d) edge node {} (e)
     (e) edge node {} (f)
     (f) edge node {} (d)
     (g) edge node {} (h)
     (h) edge node {} (i)
     (i) edge node {} (g);
    \end{tikzpicture}}

 \end{tabular}
 \captionof{figure}{Maximal clique $T$ of size twelve in $G$.}
\label{table cliques_2}
\endgroup
\end{center}
where edges drawn correspond to edges of weight 2 and every two vertices not connected by a drawn edge are connected by an edge of weight 1. Non-maximal cliques of size 10 in $G$ are contained in a clique of size 12. After removing two vertices from $T$ in Figure \ref{table cliques_2}, we get a clique of size 10 either isomorphic to clique 4 or clique 8 in Figure \ref{table cliques}. We are left to show that all cliques of type 4 in $G$ form one orbit under the action of $W_8$, and the same for type 8.

We obtain a clique of type 4 by removing two vertices from $T$ that are connected by an edge of weight 1. Let $H=\operatorname{Stab}_{W_8}(T)$ be the stabilizer of $T$ inside $W_8$. By \cite[Proposition 20 (ii)]{WvL} (after translating from the graph $\Gamma_{\{-1,0\}}$ to $G$), the group $H$ acts transitively on the set $$S_4=\{(e_{1}, e_{2},e_3,e_4) \in T^4 \mid e_{i} \cdot e_{j}=1\mbox{ for all }i\neq j\}.$$ Since we have a surjective map $S_4\longrightarrow S_2,$ with $$S_2=\{(e_{1}, e_{2}) \in T^2 \mid e_{1} \cdot e_{2}=1\},$$ we conclude that $H$ also acts transitively on $S_2$. But every clique of type 4 in $T$ is the complement of an element in $S_2$, so $H$ acts transitively on the set of cliques of type 4 in $T$ as well. Since $W_8$ acts transitively on the set of cliques of size 12 of isomorphism type $T$ in $G$, we conclude that $W_8$ acts transitively on the set of cliques of type 4 in $G$. 

We obtain a clique of type 8 by removing two vertices $e_1,e_2$ from $T$ that are connected by an edge of weight 2. By \cite[Corollary 15]{WvL}, the stabilizer $H$ of $T$ in $W_8$ acts transitively on the clique $T$. Let $e_3$ be the vertex in $T$ with $e_1\cdot e_3=e_2\cdot e_3=2$. Then for every other pair $\{c_1,c_2\}$ of vertices in $T$ intersecting with multiplicity 2, the element of $H$ that sends $e_3$ to the unique vertex $c_3$ with $c_1\cdot c_3=c_2\cdot c_3=2$ sends the pair $\{e_1,e_2\}$ to $\{c_1,c_2\}$. Therefore, $H$ acts transitively on the set 
$$S=\{\{e_1, e_2\} \mid e_1,e_2\in T,\;e_1\cdot e_2=2\}.$$ Since the cliques of type 8 in $T$ are complements of elements in $S$, we conclude that $H$ acts transitively of the cliques of type 8 in $T$, and, as before, we conclude that $W_8$ acts transitively on the set of cliques of type 8 in $G$. 

Now that we have proved that there are 8 orbits of cliques of size 10 in $G$ with only edges of weights 1 and 2, we are left to show that in characteristic 0, at most 6 of them can be realized as a generalized Eckardt point. In \cite[Proposition 4.6]{vLW} it is shown that a clique of type 4 can not be realized as a generalized Eckardt point outside characteristic 3, and the same is shown for a subset of a clique of type 7 in \cite[Proposition 5.4]{DW2} in characteristic 0. From Lemma \ref{lem: realizable iff conjuagte as well} we conclude that no cliques of type 4 or 7 can be realized as a generalized Eckardt point outside characteristic 3 and in characteristic 0, respectively. This concludes part (i).

(ii) Outside characteristic 2, the maximal size of a clique in $G$ with only edges of weights 1 and 3 corresponding to concurrent exceptional curves is 10 by \cite[Theorem 1.1]{vLW}. Moreover, from Remark \ref{rem: partners} it follows that such a clique should be of the form as claimed, since, for every element in such a clique, its partner intersects the same points on the ramification curve of $\varphi$. There is only one orbit of such cliques, which follows from \cite[Proposition 16]{WvL}. \qedhere \end{proof}

\vspace{11pt}

\begin{center}
\begingroup

\begin{tabular}{ccccccccc}\label{9 orbits}
\resizebox{4.5cm}{!}{\raisebox{1pt}{\begin{tikzpicture} [scale=0.3]
 \node [draw,circle,fill,inner sep=0pt,minimum size=2pt](t1) at (0,0) {};
 \node [draw,circle,fill,inner sep=0pt,minimum size=2pt](t2) at (1,0) {};
 \node [draw,circle,fill,inner sep=0pt,minimum size=2pt](t3) at (2,0) {};
 \node [draw,circle,fill,inner sep=0pt,minimum size=2pt](t3) at (3,0) {};
  \node [draw,circle,fill,inner sep=0pt,minimum size=2pt](t3) at (4,0) {};
 \node [draw,circle,fill,inner sep=0pt,minimum size=2pt](t3) at (2,-1) {}; 
 \node [draw,circle,fill,inner sep=0pt,minimum size=2pt](t3) at (2,-2) {}; 
 \foreach \x /\alph  in {90/a, 210/b, 330/c}{
  \node[circle,fill,inner sep=0pt,minimum size=2pt,draw,xshift=2cm,yshift=-0.2cm] (\alph) at (\x:0.8cm) {}; }
  \path[every node/.style={font=\sffamily\small}]
	(0,0) edge node {} (1,0)     
     (1,0) edge node {} (2,0)
     (2,0) edge node {} (3,0)
      (3,0) edge node {} (4,0)
      (2,0) edge node {} (2,-1)
      (2,-1) edge node {} (2,-2)
     (a) edge node {} (b)
     (a) edge node {} (c)
     (c) edge node {} (b);
    \end{tikzpicture}} }
 && &&
\resizebox{2cm}{!}{ \raisebox{1pt}{\begin{tikzpicture} [scale=0.3]
 \node [draw,circle,fill,inner sep=0pt,minimum size=2pt](t1) at (0,0) {};
 \node [draw,circle,fill,inner sep=0pt,minimum size=2pt](t2) at (1,0) {};
 \node [draw,circle,fill,inner sep=0pt,minimum size=2pt](t3) at (2,0) {};
 \node [draw,circle,fill,inner sep=0pt,minimum size=2pt](t3) at (3,0) {};
 \node [draw,circle,fill,inner sep=0pt,minimum size=2pt](t1) at (0,-2) {};
 \node [draw,circle,fill,inner sep=0pt,minimum size=2pt](t2) at (1,-2) {};
 \node [draw,circle,fill,inner sep=0pt,minimum size=2pt](t3) at (2,-2) {};
 \node [draw,circle,fill,inner sep=0pt,minimum size=2pt](t3) at (3,-2) {};
 \node [draw,circle,fill,inner sep=0pt,minimum size=2pt](t3) at (1,-1) {};
 \node [draw,circle,fill,inner sep=0pt,minimum size=2pt](t3) at (2,-1) {};
 \path[every node/.style={font=\sffamily\small}]
	(0,0) edge node {} (1,0)     
     (1,0) edge node {} (2,0)
     (2,0) edge node {} (3,0)
     (0,-2) edge node {} (1,-2)
     (1,-2) edge node {} (2,-2)
     (2,0) edge node {} (2,-1)
     (1,0) edge node {} (1,-1)
     (2,-2) edge node {} (3,-2);
 \end{tikzpicture}}}
 && &&
 \resizebox{3.3cm}{!}{
\raisebox{1pt}{\begin{tikzpicture} [scale=0.3]
 \node [draw,circle,fill,inner sep=0pt,minimum size=2pt](t2) at (1,0) {};
 \foreach \x /\alph  in {90/a, 150/b, 210/c,  270/d, 330/e, 30/f}{
  \node[circle,fill,inner sep=0pt,minimum size=2pt,draw,xshift=1cm] (\alph) at (\x:0.8cm) {}; }
    \foreach \x /\alph  in {90/g, 210/h, 330/i}{
  \node[circle,fill,inner sep=0pt,minimum size=2pt,draw,xshift=1.7cm] (\alph) at (\x:0.8cm) {}; }
 \path[every node/.style={font=\sffamily\small}]    
     (a) edge node {} (b)
     (b) edge node {} (c)
     (c) edge node {} (d)
     (d) edge node {} (e)
     (e) edge node {} (f)
     (f) edge node {} (a)
     (g) edge node {} (h)
     (h) edge node {} (i)
     (i) edge node {} (g);
 \end{tikzpicture}}}\\
1&&&&2&&&&3\\
&&&&&&&&\\
\resizebox{2.6cm}{!}{\raisebox{-3pt}{\begin{tikzpicture} [scale=0.3]
 \node [draw,circle,fill,inner sep=0pt,minimum size=2pt](t1) at (2.7,-1.5) {};
 \node [draw,circle,fill,inner sep=0pt,minimum size=2pt](t2) at (4.1,-1.5) {};
 \node [draw,circle,fill,inner sep=0pt,minimum size=2pt](t1) at (6,-1.5) {};
 \node [draw,circle,fill,inner sep=0pt,minimum size=2pt](t2) at (7.4,-1.5) {};
 \foreach \x /\alph  in {90/a, 210/b, 330/c}{
  \node[circle,fill,inner sep=0pt,minimum size=2pt,draw,xshift=2cm] (\alph) at (\x:0.8cm) {}; }
   \foreach \x /\alph  in {90/d, 210/e, 330/f}{
  \node[circle,fill,inner sep=0pt,minimum size=2pt,draw,xshift=1cm] (\alph) at (\x:0.8cm) {}; }
  \path[every node/.style={font=\sffamily\small}]
     (a) edge node {} (b)
     (a) edge node {} (c)
     (c) edge node {} (b)
     (d) edge node {} (e)
     (e) edge node {} (f)
     (f) edge node {} (d)
     (2.7,-1.5) edge node {} (4.1,-1.5)
     (6,-1.5) edge node {} (7.4,-1.5);
    \end{tikzpicture}}}
 &&&&
\resizebox{3cm}{!}{ \raisebox{1pt}{\begin{tikzpicture} [scale=0.3]
  \foreach \x /\alph  in {18/a, 90/b, 162/c,  234/d, 309/e}{
  \node[circle,fill,inner sep=0pt,minimum size=2pt,draw,xshift=1cm] (\alph) at (\x:0.8cm) {}; }
  \foreach \x /\alph  in {18/f, 90/g, 162/h,  234/i, 309/j}{
  \node[circle,fill,inner sep=0pt,minimum size=2pt,draw,xshift=1.8cm] (\alph) at (\x:0.8cm) {}; }
 \path[every node/.style={font=\sffamily\small}]   
     (b) edge node {} (c)
     (b) edge node {} (a)
     (a) edge node {} (e)
     (d) edge node {} (e)
     (c) edge node {} (d)
     (j) edge node {} (f)
     (f) edge node {} (g)
     (g) edge node {} (h)
     (h) edge node {} (i)
     (i) edge node {} (j);
      \end{tikzpicture}}}
&&&&
\resizebox{4cm}{!}{ \raisebox{1pt}{\begin{tikzpicture} [scale=0.3]
 \node [draw,circle,fill,inner sep=0pt,minimum size=2pt](t1) at (0,0) {};
 \node [draw,circle,fill,inner sep=0pt,minimum size=2pt](t1) at (1,0) {};
 \foreach \x /\alph  in {90/a, 0/b, 180/c,  270/d}{
  \node[circle,fill,inner sep=0pt,minimum size=2pt,draw,xshift=1cm] (\alph) at (\x:0.8cm) {}; }
  \foreach \x /\alph  in {90/e, 0/f, 180/g,  270/h}{
  \node[circle,fill,inner sep=0pt,minimum size=2pt,draw,xshift=1.75cm] (\alph) at (\x:0.8cm) {}; }
  \path[every node/.style={font=\sffamily\small}]
     (a) edge node {} (b)
     (a) edge node {} (c)
     (c) edge node {} (d)
     (d) edge node {} (b)
     (e) edge node {} (f)
     (f) edge node {} (h)
     (g) edge node {} (h)
     (g) edge node {} (e);
    \end{tikzpicture}}}\\
4&&&&5&&&&6\\
&&&&&&&&\\
\resizebox{3cm}{!}{\raisebox{1pt}{\begin{tikzpicture} [scale=0.3]
 \node [draw,circle,fill,inner sep=0pt,minimum size=2pt](t1) at (0,0) {};
 \node [draw,circle,fill,inner sep=0pt,minimum size=2pt](t2) at (1,0) {};
 \node [draw,circle,fill,inner sep=0pt,minimum size=2pt](t3) at (2,0) {};
 \node [draw,circle,fill,inner sep=0pt,minimum size=2pt](t3) at (3,0) {};
 \node [draw,circle,fill,inner sep=0pt,minimum size=2pt](t1) at (4,0) {};
 \node [draw,circle,fill,inner sep=0pt,minimum size=2pt](t2) at (5,0) {};
 \node [draw,circle,fill,inner sep=0pt,minimum size=2pt](t3) at (1,1) {};
 \node [draw,circle,fill,inner sep=0pt,minimum size=2pt](t3) at (1,-1) {};
 \node [draw,circle,fill,inner sep=0pt,minimum size=2pt](t3) at (4,1) {};
 \node [draw,circle,fill,inner sep=0pt,minimum size=2pt](t3) at (4,-1) {};
 \path[every node/.style={font=\sffamily\small}]
	(0,0) edge node {} (1,0)     
     (1,0) edge node {} (2,0)
     (1,0) edge node {} (1,-1)
     (1,0) edge node {} (1,1)
     (3,0) edge node {} (4,0)     
     (4,0) edge node {} (5,0)
     (4,0) edge node {} (4,-1)
     (4,0) edge node {} (4,1);
 \end{tikzpicture}}} 
&&&&
\resizebox{5.8cm}{!}{ \raisebox{-3pt}{\begin{tikzpicture} [scale=0.3]
 \node [draw,circle,fill,inner sep=0pt,minimum size=2pt](t1) at (0,0) {};
 \foreach \x /\alph  in {90/a, 210/b, 330/c}{
  \node[circle,fill,inner sep=0pt,minimum size=2pt,draw,xshift=1.75cm] (\alph) at (\x:0.8cm) {}; }
   \foreach \x /\alph  in {90/d, 210/e, 330/f}{
  \node[circle,fill,inner sep=0pt,minimum size=2pt,draw,xshift=0.75cm] (\alph) at (\x:0.8cm) {}; }
  \foreach \x /\alph  in {90/g, 210/h, 330/i}{
  \node[circle,fill,inner sep=0pt,minimum size=2pt,draw,xshift=2.75cm] (\alph) at (\x:0.8cm) {}; }
  \path[every node/.style={font=\sffamily\small}]
     (a) edge node {} (b)
     (a) edge node {} (c)
     (c) edge node {} (b)
     (d) edge node {} (e)
     (e) edge node {} (f)
     (f) edge node {} (d)
     (g) edge node {} (h)
     (h) edge node {} (i)
     (i) edge node {} (g);
    \end{tikzpicture}}}
&&&&\\
7&&&&8&&&&\\
 \end{tabular}
 
\captionof{figure}{The isomorphism types of 8 cliques in $G$ with only edges of weights 1 and 2. Each clique is a fully connected subgraphs of $G$ with the edges drawn corresponding to edges of weight~2. The edges not drawn correspond to edges of weight 1 in the clique.}
\label{table cliques}
\endgroup
\end{center}

\vspace{22pt}

\begin{center}
\begingroup
\begin{tabular}{cccc}

{\begin{tikzpicture} [scale=1.2]
  \node [draw,circle,fill,inner sep=0pt,minimum size=4pt](t1) at (0,0) {};
 \node [draw,circle,fill,inner sep=0pt,minimum size=4pt](t2) at (0,-1) {};
 \node [draw,circle,fill,inner sep=0pt,minimum size=4pt](t3) at (1,0) {};
 \node [draw,circle,fill,inner sep=0pt,minimum size=4pt](t4) at (1,-1) {};
 \node [draw,circle,fill,inner sep=0pt,minimum size=4pt](t5) at (2,0) {};
 \node [draw,circle,fill,inner sep=0pt,minimum size=4pt](t6) at (2,-1) {};
 \node [draw,circle,fill,inner sep=0pt,minimum size=4pt](t7) at (3,0) {};
 \node [draw,circle,fill,inner sep=0pt,minimum size=4pt](t8) at (3,-1) {};\node [draw,circle,fill,inner sep=0pt,minimum size=4pt](t9) at (4,0) {};
 \node [draw,circle,fill,inner sep=0pt,minimum size=4pt](t10) at (4,-1) {};
  \path[every node/.style={font=\sffamily\small}]
     (t1) edge node {} (t2)
     (t3) edge node {} (t4)
     (t5) edge node {} (t6)
     (t7) edge node {} (t8)
     (t9) edge node {} (t10);
     \end{tikzpicture}}

 \end{tabular}
 \captionof{figure}{The isomorphism type of a clique of size 10 in $G$ with only edges of weights 1 and 3. The clique is a fully connected subgraph of $G$ with the edges drawn corresponding to edges of weight~3. The edges not drawn correspond to edges of weight 1 in the clique.}
\label{table clique 9}
\endgroup
\end{center}

\begin{remark}\label{rem: on the ramification curve}
   In characteristic 2, the maximal number of concurrent exceptional curves in a point on the ramification curve of the map $(\ref{eq:map phi})$ is 16, and a clique in $G$ corresponding to such curves is of the form $K=\{c_1,d_1,\ldots,c_8,d_8\}$, where $c_i\cdot c_j=1$ for $i\neq j$, and $c_i\cdot d_i=3$ for all $i$. Moreover, there are no maximal cliques in $G$ with only edges of weights 1 and 3 of size smaller than 16, so any clique corresponding to 10 exceptional curves that are concurrent on the ramification curve is a subgraph of $K$. We see that there are 4 different isomorphism types, depending on how many pairs $c_i,d_i$ we remove from $K$: a clique of size 10 is left with 2, 3, 4, or 5 such pairs. With \texttt{magma}, we compute the number of orbits of the set of all cliques of size 10 in $G$ with only edges of weights 1 and 3 under the action of $W_8$. Since every such clique contains two vertices connected by an edge of weight 1, and $W_8$ acts transitively on the set of pairs of exceptional curves intersecting with multiplicity~1 \cite[Proposition 8]{WvL}, we can reduce computations and still find representatives for each orbit, by fixing two exceptional curves $e_1,e_2$ with $e_1\cdot e_2=1$ and computing all cliques of size 10 with edges of weights 1 and 3 containing $e_1$ and $e_2$. Interestingly enough, in this case there are multiple orbits with the same isomorphism type: those cliques of size 10 with 3 pairs $c_i,d_i$ as well as those with 4 such pairs split into two orbits. The other isomorphism types form a full orbit each. This gives a total of 6 orbits of cliques of size 10 with only edges of weights 1 and 3 in $G$. Note that this is also true outside characteristic 2, but in that case we already know that only one of these orbits contains cliques that can be realized as a generalized Eckardt point (the cliques consisting of 5 pairs, as described in Theorem \ref{thm: 9 orbits} (ii)). 
\end{remark}

\begin{corollary}\label{cor:7 possible cliques}
    If 10 lines on a del Pezzo surface of degree 1 over a field of characteristic 0 are concurrent in a point outside the ramification curve of the map $(\ref{eq:map phi})$, their intersection graph equals one of 1, 2, 3, 5, 6, 8 in Figure \ref{table cliques}.
\end{corollary}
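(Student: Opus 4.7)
The plan is to combine the structural observations already established. First, I would invoke Remark \ref{rem: partners}: if the 10 concurrent lines meet at a point $P$ lying \emph{outside} the ramification curve of the map $\varphi$ from $(\ref{eq:map phi})$, then no two of them can be partners (otherwise $P$ would be forced onto the ramification locus), so every pairwise intersection multiplicity among these lines equals $1$ or $2$. Translating through the isomorphism between $G$ and $\Gamma$, the corresponding clique of size $10$ in $G$ has edges only of weights $1$ and $2$.

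Next, I would appeal to Theorem \ref{thm: 9 orbits}(i), which classifies all cliques of size $10$ in $G$ with edge weights in $\{1,2\}$ up to the action of the Weyl group $W_8$: there are exactly $8$ isomorphism types, labelled $1$--$8$ in Figure \ref{table cliques}, and in characteristic $0$ at most the six types $1,2,3,5,6,8$ are realizable as generalized Eckardt points (types $4$ and $7$ being ruled out via \cite[Proposition 4.6]{vLW} and \cite[Proposition 5.4]{DW2} respectively, combined with Lemma \ref{lem: realizable iff conjuagte as well} to transport non-realizability across each $W_8$-orbit). Consequently the intersection graph of our 10 concurrent lines must be isomorphic to one of types $1,2,3,5,6,8$.

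This is essentially a one-line deduction, so I do not anticipate any real obstacle; the whole content of the corollary is already contained in the previous theorem and remark, and the present statement simply isolates the case of generalized Eckardt points off the ramification curve. The only subtlety worth spelling out is the use of Lemma \ref{lem: realizable iff conjuagte as well} to pass from non-realizability of a single representative (as proved in the cited references) to non-realizability of every clique in the orbit, which is what allows us to phrase the conclusion as a statement about isomorphism types rather than about specific cliques.
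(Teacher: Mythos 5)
Your argument is correct and is exactly the derivation the paper intends: the corollary is stated without proof as an immediate consequence of Remark \ref{rem: partners} (which forces all edge weights to be $1$ or $2$ for concurrency off the ramification curve) together with Theorem \ref{thm: 9 orbits}(i) (which lists the $8$ possible isomorphism types and excludes types $4$ and $7$ in characteristic $0$). Your extra remark about Lemma \ref{lem: realizable iff conjuagte as well} correctly identifies where the orbit-to-isomorphism-type passage is justified, and this is already built into the proof of Theorem \ref{thm: 9 orbits}(i).
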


To our knowledge there is no example in the literature of 10 concurrent exceptional curves on a del Pezzo surface of degree 1 whose intersection graph is isomorphic to one of cliques 1, 2, 3, 6, or 8 in Figure \ref{table cliques}. In Section \ref{sec:strategies} we discuss strategies for finding such examples, or for proving that more cliques are not realizable. 

\vspace{11pt}

We finish this section with a brief discussion on the relation between Question \ref{Q1} and the maximal number of generalized Eckardt points on del Pezzo surfaces of degree 2, a connection that was suggested to us by Bianca Viray. Over an algebraically closed field, every del Pezzo surface of degree 1 is the blow-up of a (non-unique) del Pezzo surface of degree 2, so another step in determining an upper bound for the number of generalized Eckardt points on a del Pezzo surface of degree 1 is to use what we know about del Pezzo surfaces of degree 2. A first question to answer is, which of the cliques in Figure \ref{table cliques} correspond to exceptional curves on a del Pezzo surface of degree 1 that, after blowing down an exceptional curve, could form a generalized Eckardt point on a del Pezzo surface of degree 2? In the following proposition we give a partial answer to this question. 

\begin{proposition}\label{prop: dP2's}Let $e_1,\ldots,e_{10}$ be 10 exceptional curves on a del Pezzo surface $X$ of degree~1. Assume there is another exceptional curve $c$ on $X$ such that, after blowing down $c$, there is a subset of 4 exceptional curves among $e_1,\ldots,e_{10}$ whose images on the resulting del Pezzo surface $Y$ of degree 2 are concurrent in a point. Then the intersection graph of $e_1,\ldots,e_{10}$ is not isomorphic to clique 4, 5, or 8 in Figure \ref{table cliques}.
\end{proposition}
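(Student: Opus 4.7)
The plan is to reduce to a combinatorial situation on $X$ and then dispatch each of the three cliques by enumeration.

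First, if $\pi(e_{i_j})$ is exceptional on $Y$, then
\[\pi(e_{i_j})^2 = e_{i_j}^2 + (e_{i_j} \cdot c)^2 = -1 + (e_{i_j} \cdot c)^2 = -1\]
forces $e_{i_j} \cdot c = 0$ for each $j$. Consequently $\pi(e_{i_j})$ does not pass through $p = \pi(c)$, so the common point $q$ of the 4 images is distinct from $p$ and lifts uniquely to a point $\tilde{q} \in X$. The 4 curves $e_{i_1}, \ldots, e_{i_4}$ are therefore concurrent at $\tilde{q}$ on $X$ and all orthogonal to $c$.

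Next, the 4-clique on these curves inherits only weights $1$ and $2$ from the 10-clique, so by Remark \ref{rem: partners} the point $\tilde{q}$ lies off the ramification curve of $\varphi$. Since $e_{i_j} \cdot c = 0$, one has $\pi(e_{i_j}) \cdot \pi(e_{i_k}) = e_{i_j} \cdot e_{i_k} + (e_{i_j}\cdot c)(e_{i_k}\cdot c) = e_{i_j}\cdot e_{i_k}$, so the 4-clique on $Y$ has exactly the same edge-weights. Using the anticanonical double cover $Y \to \mathbb{P}^2$ branched over a smooth quartic, one can classify 4 concurrent exceptional curves at $q \in Y$: either they are 4 lifts of 4 distinct bitangents through the image of $q$ in $\mathbb{P}^2$ (all pairwise intersections equal to $1$; call this type~$\alpha$, arising when $q$ lies off the preimage of the branch quartic), or they are the 4 lifts of 2 bitangents through that image, forming 2 partner pairs with 2 disjoint weight-$2$ edges and 4 weight-$1$ edges (type~$\beta$, arising when $q$ lies on the preimage of the branch quartic). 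Hence the 4-subset of the 10-clique must be of type~$\alpha$ or $\beta$.

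Finally, for each of cliques 4, 5, and 8, I would enumerate every 4-subset of type~$\alpha$ or type~$\beta$ and verify that no exceptional class in $\operatorname{Pic}(X)$ is simultaneously orthogonal to all 4 vertices. Fixing explicit representatives of each clique in the basis $\{L, E_1, \ldots, E_8\}$ (extractable from the proof of Theorem \ref{thm: 9 orbits}) and exploiting the Weyl-group symmetries stabilizing each clique to collapse the enumeration onto a few orbit representatives, this reduces to a finite lattice check that can be carried out by hand or in \texttt{magma}. The main obstacle is this case analysis: I expect the triangle-heavy structures of cliques 4 and 8, together with the rigid two-pentagon structure of clique 5, to preclude the existence of such a $c$ for every candidate 4-subset, which yields the desired contradiction.
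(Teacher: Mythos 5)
Your core reduction---the four curves must be orthogonal to $c$ because their images are exceptional on $Y$, followed by a finite check that no exceptional class in $\operatorname{Pic}(X)$ is orthogonal to four vertices of the clique simultaneously---is exactly the paper's argument, and the paper's \texttt{magma} computation (run on an explicit representative of each of cliques 4, 5, 8 and transported to the whole orbit via $W_8$) confirms that the enumeration you only conjecture to succeed (``I expect\dots'') does in fact succeed. Your middle paragraph classifying concurrency types on $Y$ is correct but superfluous, since the paper simply verifies the orthogonality condition for \emph{all} 4-subsets at once; dropping it also removes your one shaky inference, namely that weights $\{1,2\}$ force $\tilde{q}$ off the ramification curve of $\varphi$ (Remark \ref{rem: partners} only rules out the ramification curve when a weight-2 edge is actually present among the four chosen curves, not when they happen to pairwise meet with multiplicity 1).
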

\begin{proof}
    Assume that the set $\{e_1,\ldots,e_{10}\}$ contains a subset of 4 curves $e_{i_1},e_{i_2},e_{i_3},e_{i_4}$ that correspond to 4 concurrent exceptional curves on $Y$ after blowing down $c$. Since the exceptional curve on $Y$ are exactly the images of the 56 exceptional curves on $X$ that are disjoint from $c$, it follows that $e_{i_1},e_{i_2},e_{i_3},e_{i_4}$ are all disjoint from $c$. Choose a basis $[L],[E_1],\ldots,[E_8]$ for the exceptional classes in Pic $X$ as in Section \ref{sec:background}. We can write every exceptional curve as a vector $(a,b_1,\ldots,b_8)$ with respect to this basis as denoted in the discussion below Theorem \ref{exceptional curves}. Using this notation, observe that the set $S$ of 10 exceptional curves $$\{(1, 0, 0, 0, 0, 0, 0, 1, 1 ), ( 1, 0, 0, 1, 1, 0, 0, 0, 0 ), ( 1, 1, 1, 0, 0, 0, 0, 0, 0 ), ( 1, 0, 0, 0, 0, 1, 1, 0, 0 ),$$ 
    $$( 4, 2, 1, 2, 1, 2, 1, 1, 1 ), ( 4, 1, 2, 2, 1, 1, 1, 1, 2 ),( 4, 1, 1, 2, 1, 1, 2, 2, 1 ), (4, 1, 2, 1, 2, 1, 2, 1, 1 ),$$ 
    $$( 4, 2, 1, 1, 2, 1, 1, 2, 1 ), ( 4, 2, 1, 1, 1, 1, 2, 1, 2 )\}$$ 
    has intersection graph isomorphic to graph 4 in Figure \ref{table cliques}. It is a quick check with \texttt{magma} that no exceptional curve on $X$ is disjoint to any 4 curves in $S$ simultaneously, so no 4 curves in $S$ will correspond to concurrent exceptional curves after blowing down $c$. Since every clique in $G$ with isomorphism type that of clique 4 in Figure \ref{table cliques} is conjugate under the action of $W_8$ to the clique $S$ by Theorem \ref{thm: 9 orbits}, and the action of $W_8$ respects the intersection multiplicities, we conclude that the intersection graph of $\{e_1,\ldots,e_{10}\}$ is not isomorphic to graph 4 in Figure \ref{table cliques}. Cliques 5 and 8 are excluded analogously. 
\end{proof}
\begin{remark}
    From Proposition \ref{prop: dP2's} it follows that concurrent exceptional curves on a del Pezzo surface of degree 1 whose intersection graph looks like graphs 4, 5, or 8 in Figure \ref{table cliques} do not come from generalized Eckardt points on a del Pezzo surface of degree 2 (by blowing up a point). Therefore, we can not deduce an upper bound for the number of generalized Eckardt points realized by such cliques by using upper bounds for generalized Eckardt points on del Pezzo surfaces of degree 2. From Theorem~\ref{thm: 9 orbits} we know that clique 4 in Figure \ref{table cliques} can never be realized. We will show that clique 8 can be realized over $\mathbb{F}_{19}$ (Theorem \ref{thm: clique 8 in F19}). We also know that clique 5 can be realized, as mentioned in the introduction. It could be the case that, outside characteristic 3, a generalized Eckardt point on a del Pezzo surface of degree 1 outside the ramification curve can never come from a generalized Eckardt point on a del Pezzo surface of degree 2; for example, if it turns out that no clique except cliques 5 and 8 in Figure \ref{table cliques} can be realized as a generalized Eckardt point. 
\end{remark}

\section{Strategies - generalized Eckardt points outside the ramification curve}\label{sec:strategies}
From Theorem \ref{thm: 9 orbits} it follows that, outside characteristics 2 and 3 (where a generalized Eckardt point corresponds to 16 and 12 concurrent exceptional curves, respectively), we have a generalized Eckardt point outside the ramification curve of $(\ref{eq:map phi})$ if and only if it is possible to realize one of the cliques 1 -- 8 in Figure \ref{table cliques}, where we can exclude clique 4, and clique 7 in characteristic 0. However, apart from clique 5, it is unclear if such realizations are possible. In this section, we provide strategies to produce examples of 10 concurrent lines corresponding to a given clique, or to show that no examples can exist. 

\begin{definition}
    By a \textsl{representative} of a clique in the graph $G$ we mean an element from its orbit under the action of the Weyl group $W_8$. 
\end{definition}

For each clique in Figure \ref{table cliques}, we can use a representative of the corresponding orbit, compute the size of the stabilizer of that clique in the group $W_8$ with \texttt{magma}, and divide the order of the group $W_8$ (which is 696,729,600) by this number to obtain the size of its orbit under the action of $W_8$. The results are in the columns `Stabilizer Size' and `Orbit Size' of Table \ref{orbit_size_table}. The sizes of these orbits are very large, and obtaining a full list of all representatives would take a very long time with \texttt{magma}. In order to still obtain a list with many different representatives, we compute for each clique a `sub-orbit' as follows. Since the maximal size of cliques in $G$ with only edges of weight 2 is 3 \cite[Lemma 7]{WvL}, every clique in Figure \ref{table cliques} contains two vertices with an edge of weight 1. We fix two such vertices and compute with \texttt{magma} a list of only those cliques in each orbit that contain these two vertices. The size of these sub-orbits are found in Table \ref{orbit_size_table}, and the database containing all elements in all sub-orbits is available at \cite{DFIWcode}. 

\begin{table}[!htp]

	\centering
	\begin{tabular}{c|c|c|c}
		Clique & Stabilizer Size & Orbit Size & Sub-Orbit Size\\
		\hline
		1 & 18 & 38,707,200 & 92,160\\
		2 & 32 & 21,772,800 & 51,840\\
		3 & 36 & 19,353,600 & 46,080\\
		4 & 72 & 9,676,800 & 23,680\\
		5 & 100 & 6,967,296 & 16,128\\
		6 & 128 & 5,443,200 & 13,320\\
		7 & 192 & 3,628,800 & 8,880\\
		8 & 324 & 2,150,400 & 5,120
	\end{tabular}
	\caption{The stabilizer size, orbit size, and sub-orbit size for each clique in Figure \ref{table cliques}.}\label{orbit_size_table}
\end{table}

Since there are many representatives in each sub-orbit to choose from, a first step is determining which representatives will be best to work with. As we will discuss, there is not one best option, and we will need to consider trade-offs between them.

\vspace{11pt}

Fix a field $k$, and suppose we have points $P_1, P_2, \ldots, P_8$ in $\mathbb{P}_k^2$ in general position. For a del Pezzo surface $X$ of degree 1 obtained by blowing up these points, we denote representatives of cliques in the intersection graph $G$ on exceptional curves on $X$ by sets of curves in $\mathbb{P}^2$, using the correspondence in Theorem~\ref{exceptional curves}. We will use the following notation to simplify our discussion, letting $i,j,k$ be distinct indices in $\{1, 2, \ldots, 8\}$.

\begin{itemize}
	\item Let $\ell_{i,j}$ denote the line through points $P_{i}$ and $P_{j}$.
	\item Let $b_{i,j,k}$ denote the conic through points $P_m$ for $m \in \{1, \ldots, 8\}\setminus\{i,j,k\}$.
	\item Let $c_{i,j}$ denote the cubic through points $P_m$ for $m \in \{1,2,\ldots, 8\} \setminus \{i\}$ that is singular at point $P_{j}$.
	\item Let $q_{i,j,k}$ denote the quartic through points $P_1, P_2, \ldots, P_8$ that is singular at points $P_{i}, P_{j},$ and $P_{k}$. 
	\item Let $k_{i,j}$ denote the quintic through points $P_1, P_2, \ldots, P_8$ that is singular at all $P_m$ so that $m \in \{1,2,\ldots, 8\} \setminus \{i,j\}$.
	\item Let $s_{i}$ denote the sextic through points $P_1, P_2, \ldots, P_8$ that passes through $P_i$ three times and every other point twice.
\end{itemize}

\subsection{Clique Representatives}

First, it is always possible to pick a representative for cliques 1--8 containing four lines.

\begin{proposition}\label{four_line_prop}
	For each clique 1--8, there exists a representative containing the lines $\ell_{1,2}, \ell_{3,4}, \ell_{5,6},$ and $\ell_{7,8}$.
\end{proposition}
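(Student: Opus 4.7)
The plan is, for each of the eight cliques in Figure~\ref{table cliques}, to exhibit a representative in the $W_8$-orbit of that clique which contains the four lines $\ell_{1,2},\ell_{3,4},\ell_{5,6},\ell_{7,8}$. These four lines have classes $[L]-[E_{2i-1}]-[E_{2i}]$ for $i=1,2,3,4$, and a direct computation in $\operatorname{Pic}(X)$ shows that they pairwise intersect with multiplicity~$1$, so they form a $4$-vertex sub-clique of $G$ with only weight-$1$ edges. It therefore suffices to find, in each of the eight cliques, a $4$-vertex sub-clique of the same type and a Weyl group element carrying one to the other.

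First I would verify combinatorially that each of the eight cliques contains a sub-clique of size~$4$ with only weight-$1$ edges, i.e., an independent set of size~$4$ in the weight-$2$ subgraph depicted in Figure~\ref{table cliques}. For cliques~$4$ and~$8$, obtained from the $12$-vertex clique $T$ by removing two vertices, the weight-$2$ subgraph decomposes into a disjoint union of triangles and edges, so picking one vertex per component gives the desired set. For cliques~$5$,~$6$, and~$7$, the weight-$2$ subgraph is a disjoint union of two components (two pentagons, two squares, or two copies of $K_{1,4}$), each admitting an independent set of size~$2$. For cliques~$1$,~$2$, and~$3$, the weight-$2$ subgraph contains pendant vertices or an isolated vertex, from which a $4$-element independent set is easily exhibited by inspection.

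Next, given such a $4$-tuple $(e_1,e_2,e_3,e_4)$ in a clique, I would seek an element $w\in W_8$ mapping it to $(\ell_{1,2},\ell_{3,4},\ell_{5,6},\ell_{7,8})$. Setting $L':=e_1+e_2+e_3+e_4+K_X$, the intersection numbers $(L')^2=1$ and $L'\cdot K_X=-3$ show that $L'$ lies in the $W_8$-orbit of $[L]$ (the class of the pullback of a line, corresponding to a realization of $X$ as a blow-up of $\mathbb{P}^2$), so after one application of $W_8$ we may assume $L'=[L]$. A second Weyl element then realizes each $L'-e_i$ as a sum $[E_{2i-1}]+[E_{2i}]$ for a standard basis of exceptional divisors, producing the desired $w$; applying $w$ to the entire clique yields a representative of its orbit containing the four prescribed lines.

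The main anticipated obstacle is the final adjustment step: writing the four classes $L'-e_i$ simultaneously as sums of pairs of disjoint exceptional divisors with respect to a \emph{common} standard basis. The individual intersection numbers do not force this decomposition, and $W(E_8)$ can fail to be transitive on unordered $A_1^k$-subsystems for some $k$, so one must confirm that the $4$-tuples produced in the second step all lie in the $W_8$-orbit of $(\ell_{1,2},\ell_{3,4},\ell_{5,6},\ell_{7,8})$. In the computational spirit of the paper, this verification is cleanly handled by a \texttt{magma} search over the sub-orbit representatives from Table~\ref{orbit_size_table}: for each of the $8$ cliques one directly checks that some element of the sub-orbit contains the four prescribed lines.
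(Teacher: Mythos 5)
Your proof is correct and, in its load-bearing final step, identical to the paper's: the published proof of Proposition \ref{four_line_prop} consists entirely of a \texttt{magma} search through the precomputed sub-orbit lists (which by construction already contain $\ell_{1,2}$ and $\ell_{3,4}$) for a representative that also contains $\ell_{5,6}$ and $\ell_{7,8}$, which is exactly the verification you fall back on. The preceding structural argument via $L'=e_1+e_2+e_3+e_4+K_X$ is not needed once that search is run and, as you yourself flag, is incomplete on its own, since the numerical invariants $(L')^2=1$ and $L'\cdot K_X=-3$ do not by themselves place $L'$ in the $W_8$-orbit of $[L]$, nor do they force the simultaneous decomposition of the classes $L'-e_i$ into pairs of disjoint exceptional divisors in a common basis.
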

\begin{proof}
 We find such a representative for each clique in the list of sub-orbits produced with \texttt{magma}, available at \cite{DFIWcode}.
\end{proof}

\begin{example}
	Clique 1 has several representatives containing these four lines. We give two examples of such representatives below.\\
	
	Representative 1 has the above four lines together with the following exceptional curves
\begin{itemize}
	\item the cubic $c_{2,1}$,
	\item the quartics $q_{2,3,5}, q_{2,4,7}, q_{2,6,8},q_{3,6,7},  q_{4,5,8}.$
\end{itemize}

	Representative 2 has the above four lines together with the following exceptional curves
	\begin{itemize}
		\item the quartics $q_{1,3,5}, q_{1,4,7}, q_{1,6,8},q_{3,6,7}, q_{4,5,8}$,
		\item the sextic $s_2.$
	\end{itemize}
\end{example}

In this case, up to projective equivalence, we can fix four points in general position, say $P_1 = (0:1:1), P_3 =(1:0:1), P_5 = (1:1:1), P = (0:0:1)$, where $P$ is the point we wish to be concurrent to all 10 exceptional curves. Observe that this determines the following lines
\begin{align*}
	\ell_{1,2}&: x=0,\\
	\ell_{3,4}&: y=0,\\
	\ell_{5,6}&: x=y.
\end{align*}

Since $\ell_{7,8}$ contains $P$ and is distinct from the other three lines, then the line determined by $\ell_{7,8}$ must be of the form $x=gy$ for some $g \in k \setminus \{0\}$. Thus we can fix the following nine points for \textbf{point set-up A}

\begin{align*}
	P_1 &=(0:1:1), &  P_5 &=(1:1:1),\\
	P_2 &=(0:1:a), &  P_6 &=(1:1:c),\\
	P_3 &=(1:0:1), &  P_7 &=(d:1:e),\\
	P_4 &=(1:0:b), &  P_8 &=(d:1:f),\\
	P&=(0:0:1), & &
\end{align*}

where $a,b,c,d,e,f\in k$. We construct a del Pezzo surface of degree 1 by choosing $P_1, P_2 , \ldots , P_8$ in general position, and then blowing up the points.

An example of a del Pezzo surface of degree 1 with a generalized Eckardt point can be constructed by finding eight points in general position that pass through a given set of 10 concurrent exceptional curves and then blowing up these points. In order to determine whether such an example exists, we must determine whether it is possible to find eight points in general position of the above form so that these points determine 10 exceptional curves that pass through $P$. Since these eight points lie on lines $\ell_{1,2}, \ell_{3,4}, \ell_{5,6}$, and $\ell_{7,8}$ by construction, which are concurrent in $P$, we only need to check 6 more exceptional curves.

Notice that in this example there are several high-degree curves that must pass through the same point. As we will see later in this section, it may be computationally infeasible to compute such examples.

As an alternative, we search for representatives with fewer lines, but with more curves of low degree.

\begin{proposition}
	For each clique 1--8, there exists a representative containing the lines $\ell_{1,2}$ and $\ell_{3,4}$ and no other $\ell_{i,j}$.
\end{proposition}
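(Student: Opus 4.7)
The plan is to follow the same strategy as in the proof of Proposition \ref{four_line_prop}: exhibit, for each clique type in Figure \ref{table cliques}, an explicit representative drawn from the \texttt{magma}-generated sub-orbit database at \cite{DFIWcode}. The first step is to argue that this database is the right place to search. The lines $\ell_{1,2}$ and $\ell_{3,4}$ correspond to the classes $[L]-[E_1]-[E_2]$ and $[L]-[E_3]-[E_4]$, which satisfy $\ell_{1,2}\cdot\ell_{3,4}=1$. Since $W_8$ acts transitively on the set of ordered pairs of exceptional classes with intersection multiplicity $1$ \cite[Proposition~8]{WvL}, any clique in $G$ containing two exceptional classes of intersection multiplicity $1$ is $W_8$-conjugate to one whose two distinguished classes are exactly $\ell_{1,2}$ and $\ell_{3,4}$. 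Thus the sub-orbit of each clique type, obtained by fixing two vertices of intersection multiplicity $1$, captures every representative with the required pair of lines, up to the action of the stabilizer of that pair.

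The second step is concrete. For each of the eight clique types, iterate through the corresponding list of representatives (of sizes in the fourth column of Table \ref{orbit_size_table}) and retain only those for which none of the remaining eight exceptional classes has a vector of the form $(1,b_1,\ldots,b_8)$ with exactly two $b_i$ equal to $1$ and the rest zero, i.e., is of line type $\ell_{i,j}$. Exhibiting one element of this filtered set completes the proof, since the Picard-lattice description recalled in Section \ref{sec:background} makes the recognition of line classes purely a check on coordinate vectors.

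The main obstacle I anticipate is purely enumerative: verifying non-emptiness of the filter for every one of the eight cliques. Lines account for only $\binom{8}{2}=28$ of the $240$ exceptional classes, so in a generic representative the expected number of line-type vertices among the remaining eight is small; combined with sub-orbit sizes of at least $5{,}120$, a naive probabilistic heuristic predicts a comfortable supply of suitable representatives. The more symmetric orbits (such as cliques 7 and 8) could in principle resist the filter, in which case one would either invoke the $W_8$-transitivity of intersection-multiplicity-$1$ pairs to transport the search to a different sub-orbit, or rule out additional line classes directly on the Picard lattice by examining which orbits of $\operatorname{Stab}_{W_8}(\{\ell_{1,2},\ell_{3,4}\})$ on the remaining clique vertices contain a line class. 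Experimentally, no such pathology is expected, and the proof reduces to recording one explicit representative for each clique type.
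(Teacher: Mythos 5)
Your proposal is correct and matches the paper's proof, which likewise observes that every element of the \texttt{magma}-generated sub-orbit list contains $\ell_{1,2}$ and $\ell_{3,4}$ by construction and then simply searches that list for a representative with no further line classes. The extra justification you give (transitivity of $W_8$ on intersection-multiplicity-$1$ pairs, and recognition of line classes by their Picard-lattice coordinates) is a correct elaboration of what the paper leaves implicit, but the argument is essentially the same computational verification.
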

\begin{proof}
    Every representative in the list of sub-orbits contains lines $\ell_{1,2}$ and lines $\ell_{3,4}$ by construction. By going through the list of sub-orbits we find a representative without any other line for each clique.
\end{proof}

\begin{example}
	Consider clique 1. We have the following examples of representatives with two lines.\\
	
	Representative 1 is described by the two lines above together with
	\begin{itemize}
		\item the conics $b_{1,4,7},b_{2,3,7}, b_{2,4,8}$
		\item the cubic $c_{5,8}, c_{6,7}$
		\item the quartics $ q_{2,4,5}, q_{2,6,7}$
		\item the sextic $s_{3}$.
	\end{itemize}

Representative 2 is described by the two lines above together with
\begin{itemize}
	\item the conics $b_{1,3,4},b_{2,4,8}$
	\item the cubics $  c_{2,8},c_{3,4}, c_{5,6}, c_{6,7},c_{7,5}$
	\item the sextic $ s_2$.
\end{itemize}
	
\end{example}

As in the previous case, we can fix four points $P_1 = (0:1:1), P_3 = (1:0:1), P_5 = (1:1:1),$ and $P = (0:0:1)$ in general position up to projective equivalence. We determine the lines
\begin{align*}
	\ell_{1,2}&: x=0,\\
	\ell_{3,4}&: y=0.
\end{align*}

Since we know that $P_5, P_6, P_7,$ and $P_8$ do not lie on lines $\ell_{1,2}$ and $\ell_{3,4}$, we know that their first coordinate cannot be equal to 0. Thus we can scale it to be 1. We then construct the points for \textbf{point set-up B}

\begin{align*}
	P_1 &= (0:1:1), & P_5 &=(1:1:1),\\
	P_2 &=(0:1:a), & P_6 &=(1:c:d),\\
	P_3 &=(1:0:1), & P_7 &=(1:e:f),\\
	P_4 &=(1:0:b), & P_8 &=(1:g:h),\\
	  P &=(0:0:1), & & 
\end{align*}
where $a,b,c,d,e,f,g,h \in k$ and $P$ is the point of intersection of lines $\ell_{1,2}$ and $\ell_{3,4}$. As before, we can find examples of generalized Eckardt points by constructing eight points in general position so that their blow-up produces a del Pezzo surface of degree 1 with 10 concurrent lines. Since we have already determined that lines $\ell_{1,2}$ and $\ell_{3,4}$ pass through $P$, we need to produce eight more exceptional curves that pass through $P$.

\subsection{Strategy for Computing Examples}
We now discuss a strategy for determining whether it is possible to have 10 concurrent lines on a del Pezzo surface of degree 1 in a fixed configuration. 

The first step is to determine an equation that must be satisfied for each exceptional curve in terms of the variables $a,b,\ldots, h$ used in the point set-up A or B. We illustrate the process using conics and cubics. The higher degree curves are similar. 

A general equation of a conic in $\mathbb{P}^2$ is given by $t_1 x^2 + t_2 xy+ t_3 xz + t_4y^2 + t_5 yz+ t_6 z^2.$ Observe that a conic $b_{i,j,k}$  is determined by five points of $P_1, P_2, \ldots, P_8$. As we want this conic to pass through $P$, we plug the points in $\{P_1, \ldots, P_8,P\} \setminus \{P_i, P_j, P_k\}$ into the general equation for the conic and obtain a system of six linear homogeneous equations in six unknowns. There is at least one $i$ with $t_i \ne 0$; by fixing such an $i$, we can rewrite the system of equations to obtain a single equation of the form $F t_i=0$ where $F \in k(a,b,c,d,e,f,g,h)$. Since $t_i \ne 0$, then $F=0$.

Observe that $F$ is a rational function, so our steps may be invalid if the denominator is 0. If the goal is to find an example of a generalized Eckardt point, we can find solutions to the numerator of $F$ equals 0 first, then check that the solutions we found make the denominator non-zero. Thus in order to find examples, we can ignore the denominator at first, then perform a simple check at the end. In order to show that no examples of a generalized Eckardt point realized by a given clique exist, the denominator must be considered.

The process for cubics is similar. A general equation of a cubic in $\mathbb{P}^2$ is given by $t_1x^3 + t_2x^2y+ t_3x^2z+ t_4xy^2 + t_5xz^2 + t_6xyz + t_7 y^3 + t_8 y^2z + t_9 yz^2 + t_{10} z^3.$ The cubic $c_{i,j}$ passes through $\{P_1, P_2, \ldots, P_8\} \setminus \{P_{i}\}$ and passes through $P_{j}$ twice. Since we want the cubic to pass through $P$, we first get eight equations from forcing each of the eight points to lie on the cubic. Since the cubic is singular at $P_{j}$, we construct two more equations using two of the three partial derivatives such that the remaining coordinate of the point is non-zero. The third partial derivative will be equal to 0 if we force the other two partials to be 0.
This gives us 10 linear homogeneous equations into 10 unknowns. We then solve as we did in the conic case. 

We can construct a single rational homogeneous equation for each exceptional curve in a similar way. Once we have an equation for each curve, we need to understand whether the variety formed by the numerators of these rational functions contains any $k$-points. The rational points of the corresponding variety are possible examples of 10 concurrent lines provided that no denominator is equal to 0. However, the last step is to ensure that the 8 points determined by the variety are in general position. If they are, we have produced an example of a del Pezzo surface of degree 1 with a generalized Eckardt point.

\begin{alg}\label{alg:strategy}
	\begin{enumerate}
 \item[]
		\item Input point set-up A or B and a representative of a clique.
		\item For each exceptional curve in the clique, construct a general equation for such a curve along with a square system of homogeneous equations that determine the coefficients of this equation.
		\item For each exceptional curve in the clique, what remains is a single rational function equal to 0. Construct a variety over $k$ from the numerators of these equations.
		\item Compute $k$-points on this variety and then find which rational points produce non-zero denominators for each rational function from Step 3. 
  \item For each remaining rational point from Step 4, check that the corresponding points $P_1, \ldots, P_8$ are in general position.
		\item If an example of eight points in general position exists, output the corresponding del Pezzo surface of degree 1.
	\end{enumerate}
\end{alg}

\begin{example}\label{ex:conic}
	Using point set-up A, consider the conic $b_{2,4,8}$. By forcing points $P_1, P_3, P_5, P_6, P_7,$ and $P$ to lie on this conic, we arrive at the equations
	\begin{align*}
		t_4 + t_5 + t_6 &=0\\
		t_1 + t_3 + t_6 &= 0\\
		t_1 + t_2 + t_3 + t_4 + t_5 + t_6 &=0\\
		t_1 + t_2 + ct_3 + t_4 + ct_5 + c^2t_6&=0\\
		d^2t_1 + dt_2 + det_3 + t_4 + et_5 + e^2t_6 &=0\\
		t_6 &=0.
	\end{align*}
	
	Substituting $t_6 =0, t_4 = -t_5, t_1 = -t_3$, we arrive at the simpler system
	\begin{align*}
	t_2&=0\\
	(c - 1)t_3 + (c - 1)t_5 &=0\\
	 (de - d^2)t_3 + (e - 1)t_5 &=0.
	\end{align*}

We solve $t_5 = -t_3$ and arrive at the single equation
$$
(de-d^2-e+1)t_3 =0.
$$

Since $t_3 \ne 0$, in order for all six points to lie on this conic, we need $de-d^2-e+1=0$. Notice that in order to solve for $t_5$ in the last step, we divided by $c-1$. This solution could be incorrect if $c=1$, but in this case $c=1$ is impossible since the points $P_1, \ldots, P_8$ should be in general position.

Since this equation is linear in $e$, one simplification we could make it to solve is $$e = \frac{d^2-1}{d-1} = d+1.$$ This is valid as long as $d \ne 1$.
\end{example}

There are two main points to make about Algorithm \ref{alg:strategy}. First, since we are ignoring denominators, showing that the variety has no $k$-points does not automatically mean that there cannot be an example of the given clique. Instead, more work must be done to show what happens if any of the denominators from Step 3 equal 0. 

The next main point is that even in \texttt{magma}, computing the variety formed from Algorithm \ref{alg:strategy} is very challenging. In fact, we have not been able to successfully compute the variety for any of the cliques 1--8. Thus in order to make progress toward this question, we need to make further simplifications.  

Example \ref{ex:conic} gives one possible simplification. Whenever an equation is linear in a variable, we could solve for that variable and make a substitution in the other exceptional curve equations. This reduces the dimension of the affine space over which the variety is defined at the cost of making the equations more complicated.

Another simplification is that we can fix more of the points in the beginning set-up. However, there is no guarantee that examples will exist in this special case, and we will not be able to prove rigorously that examples do not exist by using this method. 

A third simplification is to factor the polynomials that define the equations and use a single factor for each polynomial when defining the variety. This works if the polynomials factor, such as the polynomial in Example \ref{ex:conic} which factors as $$de - d^2-e+1 = (d-1)(e-d-1)$$ though many of the polynomials defined will be irreducible. Further, this splits the computation into several pieces, one for each combination of factors from each polynomial.

\begin{example}
	To illustrate how difficult computing these varieties can be, we write the equation that must be satisfied in order for points from point set B to lie on the cubic $c_{7,5}$, omitting the variable $t_i$ which we know must be nonzero. We have
	\begin{align*}
	0=&\big(-a b c^3 g h + a b c^3 g + a b c^3 h - a b c^3 + a b c^2 d g h - a b c^2 d g - a b c^2 d h + a b c^2 d \\
	&+ a b c^2 g h - a b c^2 g - a b c^2 h + a b c^2 +a b c d g^3 - a b c d g^2 h - a b c d g^2+ a b c d g \\
	& + a b c d h - a b c d - a b c g^3 + a b c g^2 h + ab c g^2 - a b c g h - a b d g^3 + a b d g^2 h\\
	&+ a b d g^2 - a b d g h + a b g^3 - a b g^2 h - a b g^2 + a b g h + a c^3 g h - a c^3 g - a c^3 h^2\\
	& + a c^3 h - a c^2 d g h + a c^2 d g + a c^2 d h^2 - a c^2 d h - a c^2 g h + a c^2 g + a c^2 h^2 \\
	&- a c^2 h - a c d g^3 + a c d g^2 h + a c d g^2 - a c d g - a c d h^2 + a c d h + a c g^3 - a c g^2 h \\
	&- a c g^2 + a c g h + a d^2 g^3 - a d^2 g^2 h - a d^2 g^2 + a d^2 g h - a d g^3 + a d g^2 h + a d g^2\\
	& - a d g h + b c^2 d g h - b c^2 d g - b c^2 d h + b c^2 d - b c^2 g h + b c^2 g + b c^2 h - b c^2 \\
	&- b c d^2 g h + b c d^2 g + b c d^2 h - b c d^2  -b c d g^2 h + b c d g^2 + b c d g h^2 - b c d g \\
	&-   b c d h + b c d + b c g^2 h - b c g^2 - b c g h^2 + b c g h + b d g^2 h - b d g^2 - b d g h^2 + b d g h \\
&- b g^2 h + b g^2 + b g h^2 - b g h - c^2 d g h + c^2 d g + c^2 d h^2 - c^2 d h + c^2 g h - c^2 g \\
	&- c^2 h^2 + c^2 h + c d^2 g h - c d^2 g - c d^2 h^2 + c d^2 h + c d g^2 h - c d g^2 - c d g h^2 + c d g \\
		&+ c d h^2 - c d h - c g^2 h + c g^2 + c g h^2 - c g h - d^2 g^2 h + d^2 g^2 + d^2 g h^2 - d^2 g h \\
		&+ d g^2 h - d g^2 - d g h^2 + d g h\big)/\big(b c d - b c - b d 
		+ b - c d + c + d^2 - d\big).
	\end{align*}

 The numerator of this rational function is irreducible.
\end{example}

\section{Surfaces with generalized Eckardt points on the ramification curve}\label{sec:families} 
As mentioned before, Example 5.2 in \cite{vLW} contains a del Pezzo surface of degree 1 over any field of characteristic unequal to 2,  3, 5, 7, 11, 13, 17, 19 with 10 concurrent exceptional curves on the ramification curve. In this section we extend this example to a larger family of surfaces with a generalized Eckardt point. For simplicity we work over a field of characteristic~0. 

\vspace{11pt}

Let $k$ be any field of characteristic $0$. For $a,b,c,d,e,f\in k $, define the following eight points in $\mathbb{P}_{k}^{2}$ as in point set-up A.

\begin{align*}
P_1 &=(0:1:1), & P_5 &=(1:1:1),\\
P_2 &=(0:1:a), & P_6 &=(1:1:c),\\
P_3 &=(1:0:1), & P_7 &=(d:1:e),\\
P_4 &=(1:0:b), & P_8 &=(d:1:f).
\end{align*}
It is easy to compute with \texttt{magma} the determinants of the matrices in \cite[Lemma 3.4]{vLW} that determine whether the points are in general position. This gives a set $S$ of polynomials in $k[a,b,c,d,e,f]$, and if none of these polynomials vanish the points $P_1, \ldots P_8$ are in general position. In that case, the blow-up of $\mathbb{P}_{k}^{2}$ in these points is a del Pezzo surface of degree 1, which we denote by~$X$. We define the following four lines in $\mathbb{P}^2$.
\begin{itemize}
	\item[] The line $\ell_{1}$ through $P_1$ and $P_2$, which is given by $x=0$,
	\item[] the line $\ell_{2}$ through $P_3$ and $P_4$, which is given by $y=0$,
	\item[] the line $\ell_{3}$ through $P_5$ and $P_6$, which is given by $x=y$,
	\item[] the line $\ell_{4}$ through $P_7$ and $P_8$, which is given by $x=dy$.
	
\end{itemize}
Note that $\ell_1, \ldots , \ell_4$ all contain the point $P=(0:0:1)$. Let $c_{7,8}$ be the unique cubic through $P_{1},P_{3}, \ldots ,P_6, P_{8}$ that is singular in $P_8$, and $c_{8,7}$ be the unique cubic through $P_{1}, \ldots , P_{7}$ that is singular in $P_7$.  
Define $e_{1}, \ldots, e_4$ to be the strict transforms on $X$ of $\ell_1, \ldots, \ell_4$, and $e_5$, $e_5'$ the strict transforms on $X$ of $c_{7,8}$ and $c_{8,7}$, respectively. These are exceptional curves on $X$. Now assume that both $c_{7,8}$ and $c_{8,7}$ contain $P$. Then the six exceptional curves $e_1, \ldots, e_5,e_5'$ are concurrent in a point $Q \in X$ that blows down to $P \in \mathbb{P}^2$. Let $\phi$ be the morphism $$ \phi: X \to \mathbb{P}^3$$ given by the linear system $|-2K_S|$ as in (\ref{eq:map phi}). By Theorem \ref{exceptional curves}, we can write $$e_5=3L - \sum_{i=1}^{6}E_i -2E_8 $$ and
 $$e_5'=3L - \sum_{i=1}^{6}E_i -2E_7,$$
 where $E_i$ are exceptional curves on $X$ above the point $P_i$ for $1 \le i \le 8$. It is easy to see that $e_5 \cdot e_5'=3$, therefore $Q$ is contained in the ramification curve of $\phi$ and for $1 \le i \le 4$, the partners of $e_i$ are concurrent in $Q$ by Remark \ref{rem: partners}, hence there are 10 exceptional curves on $X$ concurrent in $Q$. 

We now describe a family of del Pezzo surfaces of degree 1 in terms of the variables $a,b,c,d,e,f$ for which these curves are concurrent. The assumption that $c_{7,8}$ contains $P$ gives the polynomial equation $F_1\cdot s=0$, where $s$ is a product of elements in $S$, and 
\begin{align*}
	F_1(a,b,c,d,e,f)  =  &\;a(bd^2f - 2bdf + bf - bd^3 + bd^2 + bd - b - cf + c - df^2 + 2f^2
        + d^2f - 2f \\
     &   - d^2 + d)
      +(f-d)(bcd^2 - 2bdf + bf + bd - b - cdf - cf + cd + c + 2f^2 - 2f).
\end{align*}
We disregard the factor $s$, since we are interested in configurations of $P_1,\ldots,P_8$ in general position such that $c_{7,8}$ contains $P$. 
Write $$F_1(a,b,c,d,e,f)= a\cdot p(b,c,d,e,f)+q(b,c,d,e,f).$$ 
We claim that, if $P_1,\ldots,P_8$ are in general position and $F_1(a,b,c,d,e,f)=0$, then we have $ p(b,c,d,e,f) \ne 0$ and we can write $$a=- q(b,c,d,e,f)/p(b,c,d,e,f).$$ To prove this, assume that we have $p(b,c,d,e,f) = 0$. Since we assume $F_1(a,b,c,d,e,f)=0$, this implies $q(b,c,d,e,f)=0$. But then we have 
$$0=(f-d)p(b,c,d,e,f) + q(b,c,d,e,f)=d(d-f)(c +d- f - 1)(bd - f + 1),$$
and the latter four factors are all contained in $S$, as they correspond to configurations of $P_1,\ldots,P_8$ where $P_1,P_2,P_7,P_8$ are collinear, $P_3,P_5,P_8$ are collinear, $P_3,P_6,P_8$ are collinear, and $P_1,P_4,P_8$ are collinear, respectively. Therefore, if $P_1,\ldots,P_8$ are in general position we have $ p(b,c,d,e,f) \ne 0$.

\vspace{11pt}
Writing $a=- q(b,c,d,e,f)/p(b,c,d,e,f)$, we compute with \texttt{magma} the polynomial equation that corresponds to $c_{8,7}$ containing $P$, and we find that it factors as $F_2\cdot t=0$, where $t$ is a product of elements in $S$, and 
\begin{align*}
F_2 &= b^2cd^4 - 2b^2cd^3 + b^2cd^2 + 2b^2d^3ef - 5b^2d^2ef +
        4b^2def - b^2ef - 2b^2d^4e + 3b^2d^3e + b^2d^2e \\
&   -     3b^2de + b^2e - 2b^2d^4f + 3b^2d^3f + b^2d^2f - 3b^2df +
        b^2f + 2b^2d^5 - 3b^2d^4 + 2b^2d - b^2 \\
      &  + bc^2d^3 - bc^2d^2 +
        bcd^2ef - 3bcdef + 2bcef - 2bcd^3e + 2bcd^2e +
        2bcde - 2bce - 2bcd^3f \\
        &+ 2bcd^2f + 2bcdf - 2bcf +
        bcd^4 - 2bcd^2 - bcd + 2bc - 2bd^2e^2f      + 4bde^2 -
        2be^2f + 2bd^3e^2 \\
        &- 2bd^2e^2 - 2bde^2 + 2be^2 -
        2bd^2ef^2 + 4bdef^2 - 2bef^2 + 4bd^3ef - bd^2ef -
        7bdef + 4bef\\
        &- 2bd^4e - bd^3e + 4bd^2e + bde - 2be +
        2bd^3f^2 - 2bd^2f^2 - 2bdf^2 + 2bf^2 - 2bd^4f - bd^3f \\
        &+
        4bd^2f + bdf - 2bf + 3bd^4 - 3bd^3 - bd^2 + bd - c^2def
        - c^2ef + c^2de + c^2e + c^2df + c^2f \\
        &- c^2d - c^2 + 2ce^2f
        - 2ce^2 + 2cef^2 + cd^2ef - cdef - 4cef - cd^2e + cde
        + 2ce - 2cf^2 - cd^2f\\
        &+ cdf + 2cf + cd^2 - cd + 2de^2f^2
        - 4e^2f^2 - 2d^2e^2f + 4e^2f + 2d^2e^2 - 2de^2 - 2d^2ef^2 \\
        &+ 4ef^2 + 2d^3ef + 2d^2ef - 2def - 4ef - 2d^3e + 2de +
        2d^2f^2 - 2df^2 - 2d^3f + 2df + 2d^3 - 2d^2.
\end{align*}

Let $\mathbb{A}^5$ be affine space with coordinate ring $k[b,c,d,e,f]$. Points in $\mathbb{A}^5$ correspond to configurations of eight points $P_1,\ldots,P_8$ in $\mathbb{P}^2$. From what we have shown, it follows that all points in the algebraic set $V_1=Z(F_2)$ defined by $F_2=0$ that are not contained in the algebraic set $V_2=\cup_{s\in S}Z(s)$ correspond to a del Pezzo surface of degree 1 with a generalized Eckardt point on the ramification curve of the map (\ref{eq:map phi}). We know that there are points in the set $V_1\setminus V_2$, as \cite[Example 5.2]{vLW} gives an example: the point $Q=\left(-1, \tfrac54,-1, \tfrac12,-\tfrac12\right)$. We can extend this point to an infinite family as follows. 

\begin{theorem}\label{thm:family_on_ram_curve}
The intersection of $V_1$ with the hyperplanes $b=-1$, $c=\tfrac54$, and $d=-1$ gives an elliptic curve of rank 1 in $\mathbb{A}^5$, that contains infinitely many points corresponding to a del Pezzo surface of degree 1 with a generalized Eckardt point.
\end{theorem}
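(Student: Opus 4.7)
The plan proceeds in four steps: substitute and exhibit the plane curve, verify it is smooth of genus 1, compute the rank via a Weierstrass model, and finally rule out that the infinitely many rational points fall into the bad locus $V_2=\bigcup_{s\in S}Z(s)$.

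First, I would substitute $b=-1,\ c=\tfrac{5}{4},\ d=-1$ into the explicit polynomial $F_2$ given above to obtain a polynomial $g(e,f)\in\mathbb{Q}[e,f]$. Inspecting the monomials of $F_2$, one sees that $g$ has bidegree at most $(2,2)$ in $(e,f)$, so its vanishing locus is an affine plane curve $C\subset\mathbb{A}^2_{e,f}$ whose smooth projective model is a curve of arithmetic genus at most $1$. The curve $C$ is nonempty and carries the rational point $Q_0=(1/2,-1/2)$ coming from \cite[Example 5.2]{vLW}; this point will serve as the base point of the elliptic curve structure.

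Next, I would verify that $C$ is geometrically irreducible and smooth at $Q_0$ (a Jacobian computation on $g$), so that the normalization of its projective closure is an elliptic curve $E$ over $\mathbb{Q}$ with identity $Q_0$. Using the standard procedure that turns a smooth $(2,2)$-curve with a rational point into Weierstrass form (e.g.\ via \texttt{EllipticCurve} applied to the projective closure and $Q_0$ in \texttt{magma}), I would obtain an explicit short Weierstrass equation for $E$ and then call \texttt{MordellWeilShaInformation} or \texttt{Rank} to certify that $\operatorname{rank} E(\mathbb{Q})=1$. By Mordell's theorem, the subgroup of points generated by a rank-$1$ generator is infinite, and since $E(\mathbb{Q})$ maps into $C(\mathbb{Q})\subset\mathbb{A}^2(\mathbb{Q})$ with only finitely many points lost at infinity, $C(\mathbb{Q})$ is infinite.

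Finally, to promote these infinitely many points on $C$ into honest del Pezzo configurations, I must show that only finitely many of them lie in $V_2$. The set $S$ is a finite collection of polynomials in $k[a,b,c,d,e,f]$; after the substitutions $b=-1,\ c=\tfrac{5}{4},\ d=-1$ and the elimination $a=-q(b,c,d,e,f)/p(b,c,d,e,f)$, each $s\in S$ becomes a rational function $\bar s(e,f)$, and the bad locus on $C$ is the union of the zero loci of the numerators. Since $C$ is irreducible, either $\bar s\equiv 0$ on $C$ or $\bar s$ vanishes at only finitely many points of $C$. To exclude the first alternative, it suffices to evaluate each $\bar s$ at the known point $Q_0=(1/2,-1/2)$; as $Q_0$ corresponds to the genuine del Pezzo surface of \cite[Example 5.2]{vLW}, no $\bar s$ vanishes there, so none vanishes identically on $C$. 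Hence $C(\mathbb{Q})\setminus V_2$ is the complement of a finite set in an infinite set, and each of its points gives a del Pezzo surface of degree $1$ with a generalized Eckardt point on the ramification curve.

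The main obstacles are computational rather than conceptual: one must execute the substitution into $F_2$ carefully (the coefficient arithmetic with $c=\tfrac{5}{4}$ is tedious), and one must trust \texttt{magma}'s rank verification — for a rigorous rank $1$ claim one should ensure the $2$-descent or Heegner-point argument used by \texttt{magma} returns an unconditional lower bound of $1$ and matching upper bound, which is the one place where a subtlety (failure of analytic rank routines, or nontrivial Sha) could complicate the argument.
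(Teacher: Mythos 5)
Your proposal is correct and follows essentially the same route as the paper: restrict $F_2$ to the given hyperplanes, certify with \texttt{magma} that the resulting curve is elliptic of rank $1$ (hence has infinitely many rational points), and use irreducibility together with the single known good point $Q=\left(-1,\tfrac54,-1,\tfrac12,-\tfrac12\right)$ to conclude that only finitely many of those points can lie in the degeneracy locus $V_2$. The paper's proof is just a terser version of the same argument (it even performs your suggested sanity check by verifying that the six points of height up to $100$ on $E$ avoid $V_2$), so the only differences are that you spell out the computational substeps and flag the need for an unconditional rank certification.
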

\begin{proof}
   With \texttt{magma} we check that $E$ is an elliptic curve of rank 1. Since it contains the point $Q$, which does not lie in $V_2$, we know that $E$ is not contained in $V_2$ (in fact, a quick point search with \texttt{magma} gives that all points on $E$ up to height 100 (which are 6) are not in $V_2$). Since $E$ is irreducible, we conclude that it can only intersect $V_2$ in finitely many points, hence there are infinitely many rational points on $E$ corresponding to a del Pezzo surface of degree 1 with a generalized Eckardt point.
\end{proof}

\section{Finite Fields}\label{sec:finite fields}
In this section we discuss the analogous problem of finding generalized Eckardt points over finite fields. When the characteristic of $k$ is 2 or 3, it is possible to have more than 10 concurrent lines. 

\begin{theorem}[{\cite{vLW}[Theorem 1.1]}]
	Let
	$P \in X(k)$
	be a point on the ramification curve of
	$\varphi$ in (\ref{eq:map phi}). The number of exceptional curves that go through $P$
	is at most ten if char $k \ne 2$, and at most sixteen if char $k =2$.
\end{theorem}

\begin{theorem}[{\cite{vLW}[Theorem 1.2]}]
	Let
	$P \in X(k)$
	be a point outside of the ramification curve of
	$\varphi$ in (\ref{eq:map phi}). The number of exceptional curves that go through $P$
	is at most ten if char $k \ne 3$, and at most twelve if char $k =3$.
\end{theorem}

Further, van Luijk and the fourth author produced examples of del Pezzo surfaces of degree 1 to show that the upper bounds in these theorems are sharp except possibly in char $k= 5$ outside the ramification curve. This leaves the following open question.

\begin{question}
Does there exist a del Pezzo surface of degree 1 in characteristic 5 with 10 concurrent lines outside the ramification curve?
\end{question}

In Theorem \ref{thm: 9 orbits}, we showed that there are exactly 8 possible configurations of 10 concurrent lines on a del Pezzo surface of degree 1 outside the ramification curve. 

\begin{question} \label{finite_field_ques}
	Which of the eight cliques can be realized over a finite field?
\end{question}

Answering this question will provide a full classification of all possible generalized Eckardt points over finite fields outside characteristics 2 and 3 (for the latter two characteristics it is known that there is one possible configuration for a generalized Eckardt point, and that these are realizable - see \cite{vLW}; generalized Eckardt points on the ramification curve outside characteristic 2 are described in Theorem \ref{thm: 9 orbits} (ii)).

\subsection{Results for $\mathbb{F}_p$, $p$ prime}
Let $p$ be a prime and consider the finite field $\mathbb{F}_p$. In order to search for examples of generalized Eckardt points, we first need to understand for which characteristics del Pezzo surfaces of degree 1 over $\mathbb{F}_p$ can exist.

\begin{theorem}\label{thm:finitefield17}
	For $p < 17$, there does not exist a set of eight points in general position in $\mathbb{P}_{\mathbb{F}_p}^2$ .
\end{theorem}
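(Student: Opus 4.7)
The plan is to treat the six primes $p \in \{2,3,5,7,11,13\}$ separately, combining elementary arc-size bounds for small $p$, Segre's theorem for $p=7$, and an exhaustive computer search for $p=11$ and $p=13$. Recall that any set of points in general position is, in particular, an \emph{arc} in the sense of finite geometry (no three collinear), so the first filter in every case is the classical upper bound on arc sizes.

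For $p=2$, the plane $\mathbb{P}^2(\mathbb{F}_2)$ has only $7$ points, so eight distinct points do not exist at all. For $p$ odd, the classical upper bound for the size of an arc in $\mathbb{P}^2(\mathbb{F}_p)$ is $p+1$. For $p=3$ this gives $4$ and for $p=5$ it gives $6$, both strictly less than $8$, so these cases are ruled out immediately. For $p=7$ the bound $p+1 = 8$ is met exactly, so a priori an $8$-arc could exist; however, Segre's theorem asserts that every $(q+1)$-arc in $\mathbb{P}^2(\mathbb{F}_q)$ for $q$ odd is a smooth conic. Thus any eight points with no three collinear in $\mathbb{P}^2(\mathbb{F}_7)$ all lie on a single conic, contradicting the requirement that no six of them lie on a conic.

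For $p=11$ and $p=13$ the plan is to run an exhaustive search in \texttt{magma}. Using the transitivity of $\mathrm{PGL}_3(\mathbb{F}_p)$ on ordered 4-tuples of points in general position, fix the first four points as $(1{:}0{:}0),(0{:}1{:}0),(0{:}0{:}1),(1{:}1{:}1)$ and iterate over candidate 4-subsets of the remaining $\mathbb{F}_p$-points of $\mathbb{P}^2$. At each partial tuple, prune as soon as three points are collinear, and once a complete 8-tuple is obtained, check the remaining conditions (no six on a conic, no cubic through all eight singular at one of them) using the determinantal criteria of \cite[Lemma 3.4]{vLW} already invoked in Section \ref{sec:families}. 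The search spaces have size on the order of $\binom{129}{4}$ and $\binom{179}{4}$ respectively, both easily tractable with incremental pruning, and in both cases the search returns no valid configuration.

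The main obstacle is the non-theoretical nature of the $p=11,13$ cases: a computer search suffices but a clean conceptual argument would be preferable. A promising route toward such an argument is to combine the known classification of $(p+1)$- and $p$-arcs in $\mathbb{P}^2(\mathbb{F}_p)$ for these small values of $p$ (where all large arcs are known to be conics or small perturbations thereof) with the requirement that no six of the eight points lie on a conic; the sparsity of non-conic arcs of size close to $p+1$ should give a short case analysis. A second potential pitfall is ensuring that the singular-cubic condition is checked correctly, since for each of the eight points one must verify the non-existence of a cubic through all eight that is singular there; this is a codimension-two condition in the pencil of cubics through the other seven points and is straightforward to encode, but worth double-checking against the \texttt{magma} implementation at \cite{DFIWcode}.
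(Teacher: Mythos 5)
Your proposal is correct, and for the primes $p\le 7$ it takes a genuinely more conceptual route than the paper. The paper's proof is a single uniform brute-force search: fix four points up to projective equivalence, enumerate all choices of the remaining four, and check the general-position conditions for every $p<17$. You instead dispose of $p=2$ by counting ($\mathbb{P}^2(\mathbb{F}_2)$ has only $7$ points), of $p=3,5$ by the arc bound $p+1<8$, and of $p=7$ by Segre's theorem (every $8$-arc in $\mathbb{P}^2(\mathbb{F}_7)$ is a conic, which violates the no-six-on-a-conic condition), reserving computation only for $p=11,13$; your search-space counts $\binom{129}{4}$ and $\binom{179}{4}$ are right, and your normalization of the first four points is the same reduction the paper uses. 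What your approach buys is a computation-free, human-checkable argument for two thirds of the primes and a clear explanation of \emph{why} small fields fail (the arc/conic obstruction); what the paper's approach buys is uniformity and brevity, since one script covers all cases including $p=11,13$, where no clean classification argument is offered by either of you. The only caveat is that your $p=11,13$ cases still rest on an asserted computational outcome, exactly as in the paper, so the overall logical status of the theorem is unchanged; your suggested refinement via the classification of large arcs in those planes is plausible but not carried out, and your reading of the singular-cubic condition as a codimension-two condition in the net of cubics through the other seven points is consistent with the determinantal criteria the paper imports from \cite[Lemma 3.4]{vLW}.
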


\begin{proof} We prove this theorem using \texttt{magma}. Up to projective equivalence, we can fix four points in general position. We then run Algorithm \ref{alg:strategy} to compute all possible collections of four additional points, and then determines whether these eight points are in general position. We find that for $p < 17$, there are no such eight points. 
\end{proof}

For $p \ge 17$, it makes sense to consider Question \ref{finite_field_ques}. We use point set-up A to construct the points 

\begin{align*}
	P_1 &=(0:1:1), & P_5&=(1:1:1),\\
	P_2 &=(0:1:a),& P_6 &=(1:1:c),\\
	P_3 &=(1:0:1), & P_7&=(d:1:e), \\
	P_4 &=(1:0:b), & P_8&=(d:1:f),\\
	P&=(0:0:1)
\end{align*}
 as in Section \ref{sec:strategies} and then use a brute force algorithm that (1) determines whether $P_1, P_2, \ldots, P_8$ are in general position and then (2) determines whether 10 lines pass through $P$.

 \begin{theorem}\label{thm: no Eckardt points primes 17,23}
	There are no examples of a generalized Eckardt point coming from cliques 1--8 over $\mathbb{F}_p$ for primes $p \in \{17,23\}$.
\end{theorem}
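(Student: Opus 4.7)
The plan is to carry out a brute-force search over the six-parameter family given by point set-up A, using the reductions of Section \ref{sec:strategies} to justify that this family captures every possible realization. First, suppose that a del Pezzo surface of degree 1 over $\mathbb{F}_p$ has a generalized Eckardt point arising from one of cliques 1--8. By Lemma \ref{lem: realizable iff conjuagte as well}, realizability depends only on the Weyl orbit, so Proposition \ref{four_line_prop} lets us replace the realizing clique by a representative containing the four lines $\ell_{1,2}, \ell_{3,4}, \ell_{5,6}, \ell_{7,8}$. These four lines then meet in a common point $P \in \mathbb{P}^2_{\mathbb{F}_p}$. Since $\mathrm{PGL}_3(\mathbb{F}_p)$ acts transitively on ordered $4$-tuples of points in general position in $\mathbb{P}^2$, we may further assume $P=(0:0:1)$, $P_1=(0:1:1)$, $P_3=(1:0:1)$, and $P_5=(1:1:1)$. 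The remaining points $P_2, P_4, P_6, P_7, P_8$ then lie on the four fixed lines through $P$, so that the configuration necessarily belongs to set-up A with parameters $(a,b,c,d,e,f) \in \mathbb{F}_p^6$.

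Second, for each tuple in $\mathbb{F}_p^6$ the algorithm proceeds in two stages. It first checks whether $P_1, \ldots, P_8$ are in general position by evaluating the determinantal criterion of \cite[Lemma~3.4]{vLW}; if any determinant vanishes, the tuple is discarded. Otherwise it enumerates the 240 exceptional classes via Theorem \ref{exceptional curves} and counts how many of the corresponding strict transforms pass through $P=(0:0:1)$. The four lines $\ell_{1,2}, \ell_{3,4}, \ell_{5,6}, \ell_{7,8}$ always do so by construction, so it suffices to check the remaining conics, cubics, quartics, quintics, and sextics and flag tuples producing at least six further incidences at $P$. The theorem asserts that no such tuple occurs for $p \in \{17, 23\}$.

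The principal obstacle is computational. The case $p=17$ is routine, with $17^6 \approx 2.4\times 10^7$ tuples, but $p=23$ gives $23^6 \approx 1.5 \times 10^8$ tuples, and a naive loop evaluating incidences for all 240 curves per tuple is too expensive. Efficient implementation in \texttt{magma} requires discarding tuples early in the general-position check, processing curves in order of increasing degree so that a branch can be aborted as soon as ten concurrent curves become unreachable, and exploiting fast finite-field arithmetic. For larger primes the search space grows like $p^6$ and becomes prohibitive, which presumably explains why the statement is restricted to $p \in \{17, 23\}$; tackling $p \geq 29$ would require the more targeted variety-based approach of Algorithm \ref{alg:strategy} rather than pure enumeration.
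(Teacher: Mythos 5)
Your proposal matches the paper's proof: the authors likewise run a brute-force search over point set-up A for each tuple $(a,\ldots,f)\in\mathbb{F}_p^6$, first testing general position via the determinantal criterion and then counting exceptional curves through $P=(0:0:1)$. The only difference is that you spell out the reduction to set-up A (via Proposition \ref{four_line_prop}, Lemma \ref{lem: realizable iff conjuagte as well}, and $\mathrm{PGL}_3$-transitivity), which the paper leaves implicit; note also that the paper reports the naive enumeration did terminate for $p=23$ (overnight), so your feasibility concern is only about larger $p$.
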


Interestingly, we do find examples of a generalized Eckardt point coming from clique 8 in $\mathbb{F}_{19}$.

\begin{theorem}\label{thm: clique 8 in F19}
Clique 8 can be realized over $\mathbb{F}_{19}$. 
    \end{theorem}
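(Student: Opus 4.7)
The plan is to exhibit an explicit del Pezzo surface of degree $1$ over $\mathbb{F}_{19}$ that realizes clique $8$, mirroring the computational approach used to establish the negative results for $p\in\{17,23\}$ and the brute-force framework of Algorithm \ref{alg:strategy}. Concretely, I would work in point set-up A, which fixes $P_1,P_3,P_5$ and the desired concurrence point $P=(0:0:1)$ projectively, leaving six free parameters $a,b,c,d,e,f\in\mathbb{F}_{19}$. By construction the four lines $\ell_{1,2},\ell_{3,4},\ell_{5,6},\ell_{7,8}$ already pass through $P$, so by Proposition \ref{four_line_prop} it suffices to choose a representative of clique $8$ that contains these four lines and then to find parameter values for which the remaining six exceptional curves in the representative also pass through $P$, while enforcing that $P_1,\dots,P_8$ lie in general position.

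The next step is to fix such a representative of clique $8$ (obtained from the sub-orbit database at \cite{DFIWcode}) and, for each of the six extra exceptional curves, to carry out the reduction described in Section \ref{sec:strategies}: impose passage through the eight points $P_i$ together with $P$ (plus the appropriate vanishing partial derivatives at singular points) to arrive at a single polynomial equation in $a,b,c,d,e,f$ whose vanishing is equivalent to that curve containing $P$. This yields a system of six polynomial equations over $\mathbb{Z}$, and we look for a common zero in $\mathbb{F}_{19}^6$. Since $19^6\approx 4.7\cdot 10^7$, a direct brute-force search is easily feasible in \texttt{magma}, and in fact the same search scaffolding already used for the preceding theorem can be adapted without essential modification.

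Once a candidate tuple $(a,b,c,d,e,f)\in\mathbb{F}_{19}^6$ is found, the verification has three parts: (i) check that none of the general-position polynomials in the set $S$ of \cite[Lemma 3.4]{vLW} vanish at this tuple, so that the blow-up $X$ of $\mathbb{P}^2_{\mathbb{F}_{19}}$ at $P_1,\dots,P_8$ is a del Pezzo surface of degree $1$; (ii) check that the ten chosen exceptional curves all contain the point $Q\in X$ above $P$; and (iii) check that the induced intersection graph on these ten curves is isomorphic to clique $8$ in Figure \ref{table cliques}. Steps (i)--(iii) are all routine \texttt{magma} computations using the correspondence in Theorem \ref{exceptional curves} between the vectors $(a,b_1,\dots,b_8)$ and the exceptional classes.

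The main obstacle is not logical but combinatorial and computational: $\mathbb{F}_{19}$ is small, so there is genuine risk that no tuple in the $19^6$ search space simultaneously satisfies all six curve-through-$P$ equations \emph{and} avoids the general-position locus. The choice of representative matters here, since different representatives produce polynomial conditions of different degrees and different denominator loci (recall Example \ref{ex:conic}); a representative with many conics and cubics, and few quartics/quintics/sextics, gives lower-degree equations and a better chance of finding an $\mathbb{F}_{19}$-solution not lying in the denominator loci. If the first representative yields no valid tuple, the natural fallback is to iterate over other representatives in the sub-orbit of size $5{,}120$ listed in Table \ref{orbit_size_table}, which by Lemma \ref{lem: realizable iff conjuagte as well} is a legitimate strategy since realizability is a Weyl-group-invariant property. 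The theorem is then established by displaying one explicit tuple $(a,b,c,d,e,f)\in\mathbb{F}_{19}^6$ together with the corresponding ten exceptional curves certifying the configuration.
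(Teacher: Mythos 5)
Your proposal is correct and is essentially the paper's own argument: the paper likewise uses point set-up A, fixes a representative of clique 8 containing the four lines $\ell_{1,2},\ell_{3,4},\ell_{5,6},\ell_{7,8}$ (together with six quartics), runs a brute-force search over the parameters in $\mathbb{F}_{19}$ checking general position and concurrence at $P=(0:0:1)$, and exhibits an explicit witness. The only cosmetic difference is that you phrase the search via the per-curve polynomial equations of Algorithm \ref{alg:strategy}, whereas the paper directly enumerates tuples and tests the ten curves; both amount to the same finite computation.
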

    \begin{proof}
        We use \texttt{magma} to run the brute force computation described above. We find several examples of 8 points in general position over $\mathbb{F}_{19}$ that pass through 10 concurrent exceptional curves. 
    \end{proof}

\begin{example}
    Our code fixes one representative of clique 8, with the exceptional curves 
    \begin{itemize}
        \item the lines $\ell_{12}, \ell_{34}, \ell_{56}, \ell_{78}$,
        \item the quartics $q_{135}, q_{238}, q_{246}, q_{147}, q_{257}, q_{168}$.
        \end{itemize}

    We find the the following points $P_1, \ldots, P_8$ are in general position and determine the 10 exceptional curves above. All 10 exceptional curves pass through $P$.
    \begin{align*}
        P_1 &= (0:1:1), & P_5&= (1:1:1),\\
        P_2 &= (0:1:2),  & P_6 &= (1:1:16),\\
        P_3 &= (1:0:1), & P_7 &= (7:1:18),\\
        P_4 &= (1:0:4), & P_8 &= (7:1:16),\\
        P &= (0:0:1).
    \end{align*}
        
        \end{example}

        \begin{remark}
            As shown in Table \ref{orbit_size_table}, there are many representatives of clique 8 that we can choose. Thus many more examples of realizations of clique 8 over $\mathbb{F}_{19}$ are likely to exist by changing the representative.
        \end{remark}
    
It seems difficult to push this further using a brute force method. The calculation took 1 hour for $p=17$, 2 hours for $p=19$, and overnight for $p=23$. 

\subsection{General $\mathbb{F}_q$}
Over $\mathbb{F}_{p}$, we showed that there are no points in general position for $p<17$ and we only found realizations of one clique in $\mathbb{F}_{19}$, but no realizations of any cliques when $p \in\{17, 23\}$. However, there are examples in the literature of generalized Eckardt points both on and outside the ramification curve in every characteristic (except characteristic 5 outside the ramification curve). Techniques to find such examples are similar to the technique described in the previous section; one must first choose a field extension of $\mathbb{F}_p$ by adding the roots of an irreducible polynomial. This is done in \cite[Section 5]{vLW}; for example, Example 5.1 shows 16 concurrent exceptional curves over a del Pezzo surface of degree 1 over $\mathbb{F}_{32}$, and Example 5.3 shows 10 concurrent exceptional curves on a del Pezzo surface of degree 1 on the ramification curve in characteristics 3, 5, 7, 11, 13, 17, and 19. One can see from this example that for $p= 5, 7, 11, 13, 17, 19$, examples already exist over a quadratic extension of $\mathbb{F}_p$, while in characteristic 3 it was needed to go to a cubic extension. Similarly, Example 5.4 exhibits 12 concurrent exceptional curves in a del Pezzo surface of degree~1 outside the ramification curve over $\mathbb{F}_{27}$. 

\section{Further questions}\label{sec: future}
In this paper, we make progress toward answering Questions \ref{Q1} and \ref{Q2}. We conclude with some questions that can help us make further partial progress.

It is known that cliques 4 and 7 cannot be realized outside characteristic 3 and in characteristic 0, respectively. Clique 5 can be realized at least outside characteristic 5, and the clique in Figure~\ref{table clique 9} can be realized in all characteristics; Theorem \ref{thm:family_on_ram_curve} gives a family of del Pezzo surfaces of degree 1 with such a generalized Eckardt point. However, it remains open if the remaining cliques can be realized over any field, with the exception of clique 8 over $\mathbb{F}_{19}$ (Theorem \ref{thm: clique 8 in F19}).

\begin{question}\label{ques:remaining_cliques}
    Is it possible to find a generalized Eckardt point that realizes clique 1, 2, 3, 6, or 8 over any field that is not $\mathbb{F}_{19}$?
\end{question}

While this paper focused on 10 concurrent lines over any field, 10 concurrent lines do not produce a generalized Eckardt point in characteristics 2 or 3. In these characteristics it is known that there is only one configuration of maximal cliques of exceptional curves, which can be realized as a generalized Eckardt point. It might therefore be easier to answer Question \ref{Q1} for these cases. 

\begin{question}
    How many generalized Eckardt points can a del Pezzo surface of degree 1 over a field of characteristic 2 or 3 contain?
\end{question}

Finally, since all known examples give a single generalized Eckardt point, a more attainable question leading to Question \ref{Q1} would be the following. 

\begin{question}
  Does there exist a del Pezzo of degree 1 that has more than 1 generalized Eckardt point?
\end{question}

\bibliographystyle{amsalpha}
\bibliography{bibliography}

\providecommand{\bysame}{\leavevmode\hbox to3em{\hrulefill}\thinspace}
\providecommand{\MR}{\relax\ifhmode\unskip\space\fi MR }
% \MRhref is called by the amsart/book/proc definition of \MR.
\providecommand{\MRhref}[2]{%
  \href{http://www.ams.org/mathscinet-getitem?mr=#1}{#2}
}
\providecommand{\href}[2]{#2}
\begin{thebibliography}{STVA14}

\bibitem[BCP97]{magma}
W.~Bosma, J.~Cannon, and C.~Playoust, \emph{The {M}agma algebra system. {I}.
  {T}he user language}, J. Symbolic Comput. \textbf{24} (1997), no.~3-4,
  235--265, Computational algebra and number theory (London, 1993).
  \MR{MR1484478}

\bibitem[CO99]{CO99}
P.~Cragnolini and P.~A. Oliverio, \emph{Lines on del {P}ezzo surfaces with
  {$K^2_S=1$} in characteristic {$\neq 2$}}, Comm. Algebra \textbf{27} (1999),
  no.~3, 1197--1206. \MR{1669140}

\bibitem[Des18]{Desjardins1}
J.~Desjardins, \emph{On the variation of the root number of the fibers of
  families of elliptic curves}, J. London Math. Soc. \textbf{00} (2018), no.~2,
  1--37.

\bibitem[DFIW]{DFIWcode}
J.~Desjardins, Y.~Fu, K.~Isham, and R.~Winter, {M}agma code and output,
  available at
  \url{https://github.com/rosawinter/Magma-Gen_Eckardt_Points_dP1.}

\bibitem[DS23]{DS}
J.~Demeio and S.~Streeter, \emph{Weak approximation for del {P}ezzo surfaces of
  low degree}, Int. Math. Res. Not. IMRN (2023), no.~13, 11549--11576.
  \MR{4609792}

\bibitem[DSW23]{DSW}
J.~Demeio, S.~Streeter, and R.~Winter, \emph{Hilbert property and weak
  approximation for del {P}ezzo surfaces of degree 2}, 2023, {P}reprint,
  ar{X}iv:2303.09299.

\bibitem[DW]{DW2}
J.~Desjardins and R.~Winter, \emph{Torsion points and concurrent exceptional
  curves on del {P}ezzo surfaces of degree one}, {P}reprint,
  ar{X}iv:2210.11659.

\bibitem[DW22]{DW}
J.~Desjardins and R.~Winter, \emph{Density of rational points on a family of
  del {P}ezzo surfaces of degree one}, Adv. Math. \textbf{405} (2022), 108489,
  With an appendix by Jean-Louis Colliot-Th\'{e}l\`ene (in French).

\bibitem[Jab12]{Jabara}
E.~Jabara, \emph{Rational points on some elliptic surfaces}, Acta Arith.
  \textbf{153} (2012), no.~1, 93--108.

\bibitem[KM17]{KM17}
J.~Koll\'{a}r and M.~Mella, \emph{Quadratic families of elliptic curves and
  unirationality of degree 1 conic bundles}, Amer. J. Math. \textbf{139}
  (2017), no.~4, 915--936.

\bibitem[Kol02]{Ko02}
J.~Koll\'{a}r, \emph{Unirationality of cubic hypersurfaces}, J. Inst. Math.
  Jussieu \textbf{1} (2002), no.~3, 467--476.

\bibitem[Man74]{Manin}
Yu.~I. Manin, \emph{Cubic forms: algebra, geometry, arithmetic}, North-Holland
  Publishing Co., Amsterdam-London; American Elsevier Publishing Co., New York,
  1974, Translated from the Russian by M. Hazewinkel, North-Holland
  Mathematical Library.

\bibitem[Pie12]{Pie}
M.~Pieropan, \emph{On the unirationality of del {P}ezzo surfaces over an
  arbitrary field}, 2012, {A}lgant master's thesis at Concordia University and
  Universit\'{e} Paris-Sud 11, {A}vailable at
  {\url{https://spectrum.library.concordia.ca/974864/6/Pieropan_MA_F2012.pdf}}.

\bibitem[SD01]{SD}
P.~Swinnerton-Dyer, \emph{Weak approximation and {$R$}-equivalence on cubic
  surfaces}, Rational points on algebraic varieties, Progr. Math., vol. 199,
  Birkh\"{a}user, Basel, 2001, pp.~357--404. \MR{1875181}

\bibitem[Seg43]{Se43}
B.~Segre, \emph{A note on arithmetical properties of cubic surfaces}, J. London
  Math. Soc. \textbf{18} (1943), no.~1, 24--31.

\bibitem[Seg51]{Se51}
\bysame, \emph{On the rational solutions of homogeneous cubic equations in four
  variables}, Math. Notae \textbf{11} (1951), 1--68.

\bibitem[SS91]{SS}
P.~Salberger and A.~N. Skorobogatov, \emph{Weak approximation for surfaces
  defined by two quadratic forms}, Duke Math. J. \textbf{63} (1991), no.~2,
  517--536. \MR{1115119}

\bibitem[STVA14]{STVA}
C.~Salgado, D.~Testa, and A.~V{\'a}rilly-Alvarado, \emph{On the unirationality
  of del {P}ezzo surfaces of degree two}, J. London Math. Soc. \textbf{90}
  (2014), 121--139.

\bibitem[SvL14]{SVL}
C.~Salgado and R.~van Luijk, \emph{Density of rational points on del {P}ezzo
  surfaces of degree one}, Adv. Math. \textbf{261} (2014), 154--199.

\bibitem[Ula07]{Ulas2}
M.~Ulas, \emph{Rational points on certain elliptic surfaces}, Acta Arith.
  \textbf{129} (2007), no.~2, 167--185.

\bibitem[Ula08]{Ulas}
\bysame, \emph{Rational points on certain del {P}ezzo surfaces of degree one},
  Glasg. Math. J. \textbf{50} (2008), no.~3, 557--564.

\bibitem[UT10]{UT10}
M.~Ulas and A.~Togb\'{e}, \emph{On the {D}iophantine equation {$z^2=f(x)^2\pm
  f(y)^2$}}, Publ. Math. Debrecen \textbf{76} (2010), no.~1-2, 183--201.

\bibitem[vLW23]{vLW}
R.~van Luijk and R.~Winter, \emph{Concurrent lines on del {P}ezzo surfaces of
  degree one}, Math. Comp. \textbf{92} (2023), 451--481.

\bibitem[WvL21]{WvL}
R.~Winter and R.~van Luijk, \emph{The action of the {W}eyl group on the {$E_8$}
  root system}, Graphs Combin. \textbf{37} (2021), no.~6, 1965--2064.

\end{thebibliography}
\end{document}